	\definecolor{mycitecolor}{rgb}{.1,.5,.1}
	\definecolor{mylinkcolor}{rgb}{.1,.1,.6} 
	\definecolor{myblue}{rgb}{.3,.3,.8}
	\definecolor{mitja}{rgb}{.8,.1,.1}
	\definecolor{aaron}{rgb}{.5,.1,.6}
\numberwithin{equation}{section}
\theoremstyle{plain}
  \newtheorem{lemm}[equation]{Lemma}
  \newtheorem{theo}[equation]{Theorem}
  \newtheorem{prop}[equation]{Proposition}
\theoremstyle{definition}
  \newtheorem{exam}{Example}
  \newenvironment{xam}[1][\theexam]{\begin{exam}[{#1,} continued]}{\end{exam}}
  \newtheorem{ques}{Question}
\theoremstyle{remark}
  \newtheorem*{rema}{Remark}
\newcommand{\demph}[1]{\emph{\color{myblue}{#1}}}
\def\bs{\boldsymbol}
\def\qand{\quad\hbox{and}\quad}
\def\onto{\twoheadrightarrow}
\def\measures{\mathrel{{-}\hskip-1.23ex\raisebox{.14em}{\tiny$\scriptscriptstyle\backslash\!\!\backslash$}\hskip-1.43ex\twoheadrightarrow}}
\def\pmeasures{\mathrel{{-}\hskip-1.23ex\raisebox{.14em}{\tiny$\scriptscriptstyle\backslash\!\!\backslash$}\hskip-1.43ex\rightarrow}}
\def\covers{\measures}
\def\pcovers{\pmeasures}
\def\p{\varphi}
\def\pinv{\overline\varphi}
\def\shuf{\mathop{\sqcup\!\sqcup}}
\def\arg{\bs\cdot}
\def\ngens{\mathcal N}
\def\sym{\mathrm{Sym}}
\def\qsym{\mathrm{QSym}}
\def\nsym{\mathrm{NSym}}
\def\ncsym{\bs{\Pi}}
\def\omp{\mathrm{OMP}}
\DeclareMathOperator{\Alg}{\mathrm{Alg}}
\def\malg{\mathbf{Malg}}
\def\pcbialg{\ensuremath \mathbf{PCbialg}}
\def\cbialg{\ensuremath \mathbf{Cbialg}}
\def\combinat{\ensuremath \mathbf{Combinat}}
\DeclareMathOperator{\Meas}{\mathsf{Meas}}
\DeclareMathOperator{\Cov}{\mathsf{Cov}}
\DeclareMathOperator{\PCov}{\mathsf{PCov}}
\DeclareMathOperator{\coalg}{\mathsf{coalg}}
\DeclareMathOperator{\can}{\mathsf{can}}
\renewcommand{\deg}{\operatorname{\mathsf{deg}}}
\def\N{\mathbb{N}}
\def\Z{\mathbb{Z}}
\def\ep{\varepsilon}
\def\ot{\otimes}
\def\De{\Delta}
\def\id{\operatorname{id}}
\def\Hom{\operatorname{Hom}}
\def\Zb2{\widehat{\operatorname{Z}}^2_2}
\def\m{\operatorname{m}}
\def\C{\mathcal{C}}
\def\k{\Bbbk}
\author{Aaron Lauve}
\address[Lauve]{
	Department of Mathematics \& Statistics\\
        Loyola University Chicago\\
        Chicago, IL\, 60660 
        (USA)
        }
\email{lauve@math.luc.edu}
\thanks{Lauve partially supported by NSA grant H98230-12-1-0286.}
\author{Mitja Mastnak}
\address[Mastnak]{
	Department of Mathematics \& Computing Science\\
        Saint Mary's University\\
        Halifax, Nova Scotia\, B3H 3C3 
        (CANADA)
	}
\email{mmastnak@cs.smu.ca}
\thanks{Mastnak partially supported by NSERC Discovery Grant 371994-2014}
\title[Coverings]{Bialgebra Coverings and Transfer of Structure}
\keywords{measuring, combinatorial Hopf algebras, set partitions, ordered multiset partitions, antipode, primitives, convolution algebra} 
\subjclass[2010]{18D05, 16T05, 05E05, 16T30}
\begin{document}

\begin{abstract}
We introduce the bicategory of bialgebras with coverings (which can be thought of as coalgebra-indexed families of morphisms), and provide a motivating application to the transfer of formulas for primitives and antipode. Additionally, we study properties of this bicategory and various sub-bicategories, and describe some universal constructions. Finally, we generalize Nichols' result on bialgebra quotients of Hopf algebra, which gives conditions on when the resulting bialgebra quotient is a Hopf algebra.
\end{abstract}

\maketitle


\section*{Introduction}\label{sec: intro}
We define and study the bicategory of bialgebras with coverings. Its construction was,
in large part, inspired by an idea due to Grunefelder and Par\'{e} \cite{GP:87} of indexing families of algebra morphisms by coalgebras.
In this note we focus on the algebraic aspects of the theory. Combinatorial applications will be explored further in \cite{LM:xx}. 

The study of this bicategory was originally motivated by applications to combinatorial properties of $\mathbb{N}$-graded connected bialgebras, but we hope that this work will also lead to further developments to the general theory of Hopf algebras.   A case in point is a natural generalization of Nichols' result \cite{Nic:78}, which states that a bialgebra quotient of a Hopf algebra that is either finite dimensional or cocommutative is automatically a Hopf algebra (see Theorems \ref{th: fin-dim-cover} and \ref{th: pointed-cover}).  It should also serve as a nice new example of a bicategory (and also a double category; see the remark of Section \ref{sec: bialg}). 

This work began as an attempt to understand the primitives and antipode of the Hopf algebra $\ncsym$ of symmetric functions in noncommuting variables \cite{LM:11}.   The formulas found there for expressing primitives and the antipode in terms of a distinguished generating set of $\ncsym$ point to the Hopf algebra $\omp$ of ordered multiset partitions (see Example \ref{ex: omp over nsym}). The Hopf algebra $\omp$ is the free algebra generated by all finite subsets of natural numbers and has comultiplication given by partitioning a set in all possible ways.  Even though $\omp$ is, in some sense, much bigger then $\ncsym$, it is much easier to get a handle on the formulas for computing its primitives and the antipode in terms of its generators (finite subsets of $\mathbb{N}$).  
There is a nice family of bialgebra homomorphisms from $\omp$ to $\ncsym$ that jointly covers it; and this family can then be used to transport the structure formulas from $\omp$ to $\ncsym$.

This can be formalized as follows: if $B$ and $A$ are bialgebras and $C$ a coalgebra, then we say that $f\colon B\otimes C\to A$ is a partial covering if it is a measuring ({\it i.e.}, it corresponds to an algebra map from $B$ to the convolution algebra $\Hom(C,A)$) as well as a coalgebra map.  We say that $f$ is a covering if it is also surjective.   The transfer of structure discussed above can then be formalized as follows (cf. Theorem \ref{th: transfer}):

\smallskip\noindent
\textsl{Let ${f} \colon B\ot C \to A$ be a covering. Let $\iota \colon A\to B\ot C$ be any linear section of ${f}$, that is, ${f}\circ \iota = \id$.
Then the following hold.
\begin{enumerate}
\item If $p\in B$ is primitive then for any $c\in C$, ${f}(p,c)$ is primitive in $A$.
\item If $A$ and $B$ are Hopf algebras, then their antipodes are related by the formula $S_A = {f}\circ (S_B\ot\id)\circ \iota$.
\end{enumerate}}

Given two partial coverings $f\colon B\otimes C_f\to A$ and $g\colon B\otimes C_g\to A$, we may also consider morphisms between them. 
These are given by coalgebra maps $\mathbf t\colon C_f\to C_g$ such that $f=g\circ(\id\otimes\mathbf t)$ and lead naturally to the bicategory of bialgebras with partial coverings. This bicategory context allows us to define a concept of a covering-equivalence of bialgebras and Hopf algebras.  We hope that this will serve as a nice organizing principle in classification efforts for various classes of Hopf algebras (in particular, but not limited to, combinatorial Hopf algebras).

This paper is split into four sections:  

In Section \ref{sec: measurings} we introduce several motivating examples and the bicategory of algebras with measurings ({\it i.e.}, we briefly forget about the coalgebra structure on $B$ and $A$ and we merely demand that $f\colon B\otimes C\to A$ is a measuring).  
Again, this follows the seminal work of Grunenfelder and Par\'{e} \cite{GP:87}; though here we realize a bicategory structure by making $C$ a variable (in \cite{GP:87} the choice of $C$ was fixed).  

In Section \ref{sec: coverings} we define the bicategories of bialgebras with coverings and partial coverings and give additional motivating examples. We adapt Sweedler's construction of the universal measuring coalgebra to construct a universal partial covering coalgebra. Additionally, we define and discuss the notion of equivalent coverings. We conclude the section by proving the theorem on transfer of structure discussed above. 

In Sections \ref{sec: subcategories} and \ref{sec: Hopf}, we discuss properties of subcategories of interest. 
In the former, we focus on cocommutative and graded coverings. We also identify weak initial objects for coverings of graded connected cocommutative bialgebras. In Section \ref{sec: Hopf}, we focus on coverings by Hopf algebras. We prove the following: if a Hopf algebra $B$ covers a bialgebra $A$ and if either $A$ is finite dimensional or $B$ is pointed, then $A$ is a Hopf algebra. This is the above mentioned generalization of Nichols' result.
 
\subsubsection*{Notation} Throughout, we suppress the unit map $u \colon \k \to A$ for algebras, identifying $1_\k$ with $1_A$ (and, e.g., writing $u\circ\ep(a)$ as $\ep(a)$ for bialgebra elements $a$).

\subsubsection*{Acknowledgements} We would like to thank the referee for a number of useful comments, in particular for pointing us to the references \cite{HFV:17}, \cite{PR:16} and for explaining a more categorical way of approaching some of the constructions discussed in the paper: see the discussion at the end of Section \ref{sec: measurings}, the paragraph preceding Theorem \ref{th: universal-coalg}, the remark following Proposition \ref{th: universal-bialg}, and the comment preceding Question \ref{q:cocommutative}.

\section{Measurings}\label{sec: measurings}

If $A$ and $B$ are algebras and $C$ a coalgebra, then we say that a map $f\colon B\ot C\to A$ is a \demph{measuring} (or measures $B$ to $A$) if it corresponds to a unital algebra map from $B$ to the convolution algebra $\Hom_k(C,A)$. More explicitly, $f$ is a measuring if, for all $b,b'\in B$ and $c\in C$, it satisfies
\begin{eqnarray}
\label{eq: meas-prod} f(bb'\otimes c) &=& \sum_{(c)} f(b\otimes c_1)f(b'\otimes c_2),  \\
\label{eq: meas-counit} f(1\otimes c) &=& \ep(c).
\end{eqnarray}
Measurings were first studied by Sweedler \cite{Swe:69}. 

\subsection{Motivating examples}\label{sec: examples}
It was Grunenfelder and Par\'{e} \cite{GP:87} who first observed that measurings can be viewed as $C$-indexed families of morphisms from $B$ to $A$. From this perspective, it is natural to abuse notation and write $f(b,c)$ for $f(b\otimes c)$ and we do so freely in what follows. We are most interested in the cases where the family $\left\{ f(\arg, c)\right\}_{c\in C}$ jointly spans $A$, and the most interesting of these come when no single $f(\arg,c)$ is surjective. We now illustrate with several examples (and one non example). 

\begin{exam}\label{ex: A.id over A}
Let $A,B$ be algebras over $\k$, and $f\colon B\to A$ an algebra map. Give $\k$ the coalgebra structure induced by letting $1$ be group-like. Then $\overline{f}\colon B\ot \k \to A$ given by $\overline f(b,1) = f(a)$ is a measuring. Putting $f=\id$, deduce that every algebra has at least one surjective measuring, albiet not a particularly interesting one.
\end{exam}

\begin{exam}\label{ex: comm over noncomm}
Identify the group algebra of $\Z$ with $\k[z,z^{-1}]$ and let $A$ be a Hopf algebra. Now let $C=\coalg(A)$.  We define a measuring $f\colon \k[z,z^{-1}]\ot C\to A$ by putting $f(z,\arg)=\id$, $f(z^{-1},\arg)=S$, and $f(z^n,\arg)=\id^{* n}$, where $\id^{* n}$ stands for the $n$-th convolution power of $\id\in\Hom_{\k}(A,A)$. In more detail, we have that 
$$
f(z^n,a) = \begin{cases} a_1\cdots a_n & n>0\\
\mu\ep(a) & n=0 \\
S(a_1)\cdots S(a_{-n}) & n<0\end{cases}. 
$$
\end{exam}

Already we catch a glimpse of the possibilities beyond ordinary morphisms: if the algebra $A$ above is noncommutative, then we find it is jointly spanned by mappings from a commutative one. We may ask under what conditions is the $f$ above a coalgebra map. This happens if and only if $A$ is cocommutative. 
(Assuming $A$ is cocommutative, checking the coalgebra map condition is straightforward; the reverse implication follows from Lemma \ref{lem: Ccoc}, noting that $\k[z,z^{-1}]$ is Hopf.)

Above and below, we employ a useful method of building measurings. If a linear map $f(\arg,c)$ is defined for generators of $B$, then we may \emph{decree} $f$ to satisfy \eqref{eq: meas-prod} and \eqref{eq: meas-counit}. Checking that this respects the relations among those generators will frequently be left to the reader.

\begin{exam}\label{ex: nsym over sym}
Let $\k[p_1,p_2,\ldots]$ and $\k\langle H_1,H_2,\ldots\rangle$ denote, respectively, the free commutative and noncommutative algebras on countable generators, one in each degree. These are presentations of what are known in algebraic combinatorics as the rings of \emph{symmetric functions} ($\sym$) and \emph{noncommutative symmetric functions} ($\nsym$), respectively. Recall that for any positive integer $n$, we say $\lambda$ is a partition of $n$ (written $\lambda \vdash n$), if $\lambda$ is an tuple of positive integers $(\lambda_1\geq \ldots \geq \lambda_r)$ summing to $n$ (written $|\lambda| = \lambda_1+\cdots+\lambda_r = n$). We take $(0)$ to be the unique partition of $0$. Then $\sym$ has basis $\{p_\lambda:=p_{\lambda_1}p_{\lambda_2}\dotsb p_{\lambda_r} \mid \lambda\vdash n, n\in\N\}$, identifying $p_{(0)}$ with $1$. Let each $p_k$ be primitive so that $\sym$ is Hopf (being the universal envelope of a countably infinite dimensional abelian Lie algebra) and put $C=\coalg(\sym)$. The map $f \colon \nsym\otimes C \to \sym$ given on generators by $f(H_k,p_\lambda) = \delta_{k,|\lambda|}\, p_\lambda$ induces a surjective measuring.
\end{exam}

In the preceding example, one can view the map $f$ as \demph{degree preserving:} taking $\deg (b\otimes c) = \deg c$, we have $\deg f(b\otimes c) = \deg(b\otimes c)$. Such maps will be important in applications of our framework to combinatorics, but we do not insist on this property at present. See Sections \ref{sec: combinat} and \ref{sec: universal}. 

\begin{exam}\label{ex: poly over qsym}
Recall that for any positive integer $n$, we say $\gamma$ is a composition of $n$ (written $\gamma \vDash n$), if $\gamma$ is an ordered tuple of positive integers $(\gamma_1,\ldots,\gamma_r)$ summing to $n$ (written $|\gamma| = \gamma_1+\cdots+\gamma_r = n$). We take $(0)$ to be the unique composition of $0$. The space $\k\{ M_\gamma \mid \gamma\vDash n, n\in\N \}$ becomes an algebra under the shuffle product: $M_\alpha \cdot M_\beta = \sum_{\gamma\in\alpha\shuf\beta} M_\gamma$. See \cite{Ree:58}, \cite{Hof:00}, \cite[Ch. 7]{Sta:99} for details. (Here we identify $M_{(0)}$ with $1$.) This is a presentation of what is known as the ring $\qsym$ of \emph{quasisymmetric functions}. It is a graded connected Hopf algebra with (noncocommutative) deconcatenation coproduct: if $\gamma = (\gamma_1,\gamma_2,\ldots,\gamma_r)$, then $\Delta(M_\gamma) = \sum_{0\leq i\leq r} M_{(\gamma_1,\ldots,\gamma_i)} \otimes M_{(\gamma_{i+1},\ldots, \gamma_r)}$. (Here we take $\gamma_0=\gamma_{r+1}=0$.) The map $f \colon k[z] \otimes \qsym \to \qsym$ given by $f(z^n,M_\gamma) = \delta_{n,r} \, M_{\gamma_1}\cdots M_{\gamma_r}$ is a measuring. 
\end{exam}

The above resembles Example \ref{ex: comm over noncomm}, but there are important differences: (1) here the mapping is a length-graded convolution power (again, useful for applications to combinatorics); and (2) the measuring algebra $B$ is now only a bialgebra, not a Hopf algebra---this distinction will become important in Proposition \ref{th: cocomm}. The above also represents our first example of a family of maps that is not jointly surjective, as $\qsym$ is not generated by the monomial quasisymmetric functions $M_k$. (The proper subalgebra they generate is well-known to be isomorphic to $\sym$.)

\subsection{The bicategory of algebras with measurings}\label{sec: alg}

Here we define a bicategory $\malg_\k$ of algebras with measurings, and collect some of its elementary properties.  

\smallskip\noindent\emph{\underline{Objects:}} algebras over $\k$.

\smallskip\noindent\emph{\underline{$1$-Cells:}} morphisms $f\colon B\to A$ are identified with the set $\Meas(B,A)$ of measurings $f\colon B\ot C_f\to A$. 

\begin{description}
\item[\it Identities] $\id\colon A\to A$ is given by $C_{\id}=\k$.  

\smallskip
\item[\it Composition] for $f\colon B\to A$ and $g\colon D\to B$ we define their composite $h=fg\colon D\to A$ by $C_h=C_g\otimes C_f$ and
$h=f\circ (g\ot\id)$.
\end{description}

\smallskip\noindent\emph{\underline{$2$-Cells:}} if $f,g\colon B\to A$ are $1$-cells, then a 2-cell $\mathbf t\colon f\to g$ is a coalgebra map $\mathbf t\colon C_f\to C_g$ making the following diagram commute.

\begin{center}
\begin{tikzpicture}[]
\matrix (m) [matrix of math nodes, row sep=2.5em,column sep=2.5em]
{ B\otimes C_f &  \\
B\otimes C_{g} & A \\ };
\path[-]
	(m-1-1) 	edge[->] node[left]{$ \id\otimes\mathbf t $} (m-2-1)
			edge[->] node[above right]{$ f $}  (m-2-2)
	(m-2-1) 	edge[->] node[below] {$ g $} (m-2-2);
\end{tikzpicture} 
\end{center}

\begin{description}
\item[\it Identities] if $f\colon B\to A$ is a $1$-cell, then $\id\colon f\to f$ is the identity map on $C_f$.

\smallskip
\item[\it Horizontal composition] for $f,f'\in\Meas(B,A)$ and $g,g'\in\Meas(D,B)$, and $2$-cells $\mathbf s\colon f\to f'$ and $\mathbf t\colon g\to g'$, we define $\mathbf s\circ \mathbf t\colon fg\to f'g'$ so as to make the diagram at left below commute.

\smallskip
\item[\it Vertical composition] for $f,g,h\in\Meas(B,A)$ and $2$-cells $\mathbf t\colon f\to g$, $\mathbf s\colon g\to h$, we define $\mathbf{st}\colon f\to h$  so as to make the diagram at right below commute.

\begin{center}
\begin{tikzpicture}[baseline]
\matrix (m) [matrix of math nodes, row sep=3.5em,column sep=3em]
{%
D\otimes C_f \otimes C_g & B\otimes C_g &  \\
D\otimes C_{f'} \otimes C_{g'} & B\otimes C_{g'} & A \\ 
};
\path[-]
	(m-1-1) edge[->] node[left]{$ \id\otimes\mathbf s $} (m-2-1)
		edge[->] node[above right]{$ f $}  (m-1-2)
	(m-1-2) edge[->] node[left]{$ \id\otimes\mathbf t $} (m-2-2)
		edge[->] node[above right]{$ g $}  (m-2-3)
	(m-2-1) edge[->] node[below] {$ f' $} (m-2-2)
	(m-2-2) edge[->] node[below] {$ g' $} (m-2-3);
\end{tikzpicture}
\quad\quad\quad
%
\begin{tikzpicture}[baseline]
\matrix (m) [matrix of math nodes, row sep=2.5em,column sep=3.25em]
{B\otimes C_f &  \\
B\otimes C_{g} &  A \\
B\otimes C_{h} & \\
};
\path[-]
	(m-1-1) edge[->] node[left]{$ \id\otimes\mathbf t $} (m-2-1)
		edge[->] node[above right]{$ f $}  (m-2-2)
	(m-2-1) edge[->] node[left]{$ \id\otimes\mathbf s $} (m-3-1)
		edge[->] node[below] {$ g $} (m-2-2)
	(m-3-1) edge[->] node[below] {$ h $} (m-2-2);
\end{tikzpicture}
\end{center}
\end{description}

The $1$-cell category $\Meas(B,A)$ inherits many of the properties of the category of coalgebras. 
In particular, pushouts and direct sums (categorical coproducts) exist. In fact, this holds more generally for any slice category of a category with colimits, cf. \cite[Ch. V]{Mac:98}. As the theme of this work is computation, we show how to construct these objects explicitly for related categories in Section \ref{sec: bialg}.

\section{Partial Coverings and Transfer of Structure}\label{sec: coverings} 

If $A,B$ are bialgebras and $C$ a coalgebra, then we say that a map $f\colon B\otimes C\to A$ is a \demph{partial covering} if it is a measuring as well as a coalgebra map.  We say that $f$ is a \demph{covering} if it is also surjective.  In what follows, when we wish to deemphasize the indexing coalgebra, we write $B \pcovers A$ to indicate that $B$ partially covers $A$.

\subsection{The bicategories of bialgebras with coverings and partial coverings}\label{sec: bialg}

We define bicategories $\pcbialg$ and $\cbialg$ of bialgebras with partial coverings and bialgebras with coverings analogous to the bicategory of algebras with measurings. 

\begin{rema}
The bicategories $\malg$, $\pcbialg$, and $\cbialg$ can also be viewed as double categories with horizontal morphisms being (algebra or bialgebra) homomorphisms and vertical homomorphisms being measurings or (partial) coverings. 
We do not explore this viewpoint any further here.
\end{rema}

\medskip\noindent\emph{\underline{Direct Sums:}} Let $B$, $A$ be bialgebras.  
If $f \colon B\otimes C_f\to B$, $g\colon B\otimes C_g\to A$ are (partial) coverings, define the measuring $(h,C_h)$ by putting $C_h:=C_f\oplus C_g$ and $h:=f\circ \iota_f+g\circ\iota_g$.  (Here, $\iota_f, \iota_g$ are the canonical injections of $C_f, C_g$ into $C_f\oplus C_g$.)
One checks that $h$ is a partial covering. If either $f$ or $g$ is a covering, then so is $h$.

\medskip\noindent\emph{\underline{Pushouts:}}  Let $B$, $A$ be bialgebras, and consider coverings $f,g,h\in\PCov(B,A)$ and morphisms $\mathbf s\colon f\to g$, $\mathbf{t}\colon f\to h$ between them. The pushout (fibered coproduct) is the partial covering $(k,C_k)$ completing the diagram

\begin{center}
\begin{tikzpicture}[baseline]
\matrix (m) [matrix of math nodes, row sep=1.5em,column sep=2em]
{%
& g &  \\
f & & k \\
& {h} & \\
};
\path[-]
	(m-1-2) edge[<-] node[xshift=-.5em, yshift=.35em]{$\mathbf s$} (m-2-1)
		edge[->,dashed] node[xshift=.55em, yshift=.5em]{$\mathbf{\tilde{t}}$}  (m-2-3)
	(m-3-2) edge[<-] node[xshift=-.6em, yshift=-.4em]{$\mathbf{t}$} (m-2-1)
		edge[->,dashed] node[xshift=.5em, yshift=-.4em]{$\mathbf{\tilde{s}}$} (m-2-3);
\end{tikzpicture},
\end{center}
where $C_k=C_g\oplus_{C_f\!} C_{h} := (C_g\oplus C_{h})/I$, where $I=\{(\mathbf s(x),-\mathbf{t}(x)) \mid x\in C_f\}$ is the pushout in the category of coalgebras. The partial covering $k\colon B\otimes C_k\to A$ is given by $k(b\otimes ((x,y)+I))=g(b\otimes x)+h(b\otimes y)$.  If $f,g,h$ are coverings, then so is $k$.  If $\mathbf s$ is surjective, then so is $\mathbf{\tilde{s}}$.  Similarly for $\mathbf t,\mathbf{\tilde{t}}$.

\begin{xam}[\ref{ex: A.id over A}]\label{ex: morphism family} 
Suppose $\Xi$ is a jointly surjective family of bialgebra maps from $B$ to $A$ and $C=k\Xi$ is the free pointed coalgebra on $\Xi$. Define a bilinear mapping $f \colon B\ot C\to A$ by $f(b,\xi)=\xi(b)$, for $b\in B$, $\xi\in \Xi$. It is straightforward to check that $f$ is a covering. (In particular, bialgebra maps are coverings.)
\end{xam}

\begin{xam}[\ref{ex: comm over noncomm}]
Let $A$ be a group algebra $\k G$. It is straightforward to check that the measuring $f$ of Example \ref{ex: comm over noncomm} is a coalgebra map.
\end{xam}

\begin{xam}[\ref{ex: nsym over sym}]
The algebra $\nsym=\k\langle H_1,H_2,\ldots\rangle$ becomes a Hopf algebra with coproduct on generators given by $\Delta H_k = \sum_{i+j=k} H_i \ot H_k$. (In fact, as shown in \cite{GKLLRT:95}, it is the graded dual of the Hopf algebra $\qsym$.) We check that the measuring $f$ of Example \ref{ex: nsym over sym} is a coalgebra map: 
\begin{align*}
	\left(f^{\otimes2} \Delta\right)\,(H_k\otimes p_\lambda) &= f^{\otimes2} \sum_{\substack{i+j=k\\ \mu\sqcup\tau=\lambda}} \left(H_i\otimes p_\mu \right) \otimes \left( H_j \otimes p_\tau\right) \\
	&=  \sum_{\substack{i+j=k\\ \mu\sqcup\tau=\lambda}} \delta_{i,|\mu|} \, p_\mu \otimes \delta_{j,|\tau|} \, p_\tau \\
	&= \delta_{k,|\lambda|} \sum_{\mu\sqcup\tau=\lambda} p_\mu \otimes p_\tau \ 
	= \left(\Delta  f\right) (H_k \otimes p_\lambda) .
\end{align*}
\end{xam}

\begin{exam} \label{ex: poly over sym}
We give another covering of $\sym$. Give $\k[x]$ the structure of Hopf algebra by letting $x$ be primitive. Let $C = \coalg(\sym)$, using the power sum basis $\{ p_\lambda\}$ as above. For a partition $\lambda$ of length $r$, put $f(x^n,p_\lambda) = \delta_{n,r} \, {n!}\cdot p_{\lambda}$. It is easy to check that $f \colon \k[x] \ot C \to \sym$ is a measuring. To show that $f$ is a partial covering, we verify that it is a coalgebra map.
\begin{align*}
   \left(f^{\otimes2} \Delta\right)\,(x^n\otimes p_\lambda) 
	&= f^{\otimes2} \! \sum_{\substack{i+j=n\\ I\sqcup J= [r]}} \! \binom{n}{i} \! \left(x^i \otimes p_{\lambda_I}  \right) \otimes \left( x^{j} \otimes p_{\lambda_J} \right) \\
	&= \sum_{\substack{i+j=n\\ I\sqcup J = [r]}} \! \binom{n}{i} \delta_{i,|I|} \, \delta_{j,|J|} \, {i!}{j!} \cdot p_{\lambda_I} \otimes p_{\lambda_J} \\
	&= \delta_{n,r} \, {n!} \sum_{I\sqcup J = [r]} p_{\lambda_I} \otimes p_{\lambda_J} 
\ = \ 
   \left(\Delta  f\right) (x^n \otimes p_\lambda)	
\,.
\end{align*}
\end{exam}

\subsection{Universal partial coverings}\label{sec: universal coverings}

If $A$ and $B$ are algebras then Sweedler \cite{Swe:69} constructs a universal measuring coalgebra $\mathcal M(B,A)$ and a universal measuring $\Omega\colon B\otimes \mathcal M(B,A)\to A$ satisfying the following universal property: 
\begin{center}
\begin{tikzpicture}[]
\matrix (m) [matrix of math nodes, row sep=3.5em,column sep=3.5em]
{%
B\otimes C_f &  \\
B\otimes \mathcal M(B,A) & A \\ 
};
\path[-]
	(m-1-1) edge[->,dashed] node[left]{$\exists \id\otimes F $} (m-2-1)
		edge[->] node[above right]{$ \forall f $}  (m-2-2)
	(m-2-1) edge[->] node[below] {\small$ \Omega $} (m-2-2);
\end{tikzpicture}
\end{center}
That is, if $f\colon B\otimes C\to A$ is a measuring, then there exists a unique coalgebra map $F\colon C\to \mathcal M(B,A)$ such that $f=\Omega\circ(\id\ot F)$.  

This universal object may be viewed as a generalization of Sweedler's finite dual $B^\circ$ (taking $A=\k$). 
In the recent literature, one finds other generalizations \cite{HFV:17, PR:16}, the former of which shares some overlap with our work (see the remark following Proposition \ref{th: universal-bialg}). In Theorem \ref{th: universal-coalg}, we adapt Sweedler's proof to construct universal partial coverings. We remark that the universal object $\C(B,A)$ created there is naturally a subcoalgebra of Sweedler's $\mathcal M(B,A)$.

\begin{theo}\label{th: universal-coalg} Let $A$ and $B$ be bialgebras.  Then there exists a universal covering coalgebra $\C(B,A)$ and a universal partial covering $\Omega\colon B\otimes \C(B,A)\to A$ with the following universal property: if $f\colon B\otimes C\to A$ is any partial covering, then there exits a unique coalgebra map $F\colon C\to \C(B,A)$ such that $F=\Omega\circ(\id\ot F)$.
\end{theo}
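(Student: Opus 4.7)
The plan is to realize $\C(B,A)$ as the maximal subcoalgebra of Sweedler's universal measuring coalgebra $\mathcal M(B,A)$ on which the universal measuring happens to also be a coalgebra map. So I would start by invoking Sweedler's construction, obtaining a coalgebra $\mathcal M(B,A)$ together with a universal measuring $\Omega_{\mathcal M}\colon B\otimes\mathcal M(B,A)\to A$.

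Next I would define
\[
  \C(B,A) \;:=\; \sum \bigl\{\, D \subseteq \mathcal M(B,A)\ \big|\ D \text{ a subcoalgebra and } \Omega_{\mathcal M}|_{B\otimes D}\text{ is a coalgebra map}\,\bigr\}.
\]
Since an arbitrary sum of subcoalgebras is again a subcoalgebra, $\C(B,A)$ is a subcoalgebra of $\mathcal M(B,A)$. Moreover, being a coalgebra map is a linear condition in the second argument (for each fixed $b\in B$, one must verify an identity in $A\otimes A$ and in $\k$), so the restriction $\Omega:=\Omega_{\mathcal M}|_{B\otimes\C(B,A)}$ inherits the coalgebra-map property from each summand. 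Combined with the measuring property inherited from $\Omega_{\mathcal M}$, this makes $\Omega$ a partial covering.

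To verify the universal property, let $f\colon B\otimes C\to A$ be any partial covering. Forgetting the coalgebra-map condition, $f$ is a measuring, so Sweedler's universal property supplies a unique coalgebra map $F\colon C\to \mathcal M(B,A)$ with $f=\Omega_{\mathcal M}\circ(\id\otimes F)$. The image $F(C)$ is a subcoalgebra of $\mathcal M(B,A)$, and I would show $F(C)\subseteq \C(B,A)$ by checking that $\Omega_{\mathcal M}|_{B\otimes F(C)}$ is a coalgebra map. For $d=F(c)\in F(C)$, one has $\Delta(d)=\sum F(c_1)\otimes F(c_2)$ because $F$ is a coalgebra map, and then
\[
  \Delta_A\Omega_{\mathcal M}(b\otimes d) \;=\; \Delta_A f(b\otimes c) \;=\; \sum f(b_1\otimes c_1)\otimes f(b_2\otimes c_2) \;=\; \sum \Omega_{\mathcal M}(b_1\otimes d_1)\otimes \Omega_{\mathcal M}(b_2\otimes d_2),
\]
with the counit condition handled analogously. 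Hence $F$ corestricts to a coalgebra map $\tilde F\colon C\to \C(B,A)$ satisfying $f=\Omega\circ(\id\otimes\tilde F)$. Uniqueness of $\tilde F$ is immediate: composing any competitor $G\colon C\to \C(B,A)$ with the inclusion $\C(B,A)\hookrightarrow \mathcal M(B,A)$ yields a coalgebra map realizing $f$ as a factoring of $\Omega_{\mathcal M}$, so Sweedler's uniqueness forces it to equal $F$, hence $G=\tilde F$.

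The only real subtlety is the claim that the coalgebra-map condition really is linear in $c$, so that the sum defining $\C(B,A)$ remains a space on which $\Omega_{\mathcal M}$ is a coalgebra map; this is a straightforward check since both sides of the equations $\Delta_A\circ\Omega = (\Omega\otimes\Omega)\circ(\tau\circ(\Delta_B\otimes\Delta_C))$ and $\epsilon_A\circ\Omega = \epsilon_B\otimes\epsilon_C$ are linear in the $C$-argument, and the comultiplication restricted to any subcoalgebra $D$ agrees with the ambient comultiplication of $\mathcal M(B,A)$. The construction as a subobject of $\mathcal M(B,A)$ will also make transparent the claim in the text that $\C(B,A)$ sits naturally inside Sweedler's universal measuring coalgebra.
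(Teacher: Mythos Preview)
Your argument is correct and follows essentially the same strategy as the paper's proof. The only cosmetic difference is the choice of ambient coalgebra: the paper works directly inside the cofree coalgebra on $L(B,A)$ and carves out the largest subcoalgebra on which the canonical map is a partial covering (imposing the measuring and coalgebra-map conditions simultaneously), whereas you first pass to Sweedler's $\mathcal M(B,A)$---itself the largest subcoalgebra of that same cofree coalgebra on which the canonical map is a measuring---and then impose only the coalgebra-map condition. The resulting $\C(B,A)$ is the same object either way, and both proofs hinge on the identical observation that the defining conditions are linear in the $C$-variable, so a sum of subcoalgebras satisfying them again satisfies them. Your route has the small advantage of making the inclusion $\C(B,A)\subseteq\mathcal M(B,A)$ immediate, which the paper states separately just before the theorem.
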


\begin{proof}
Let $C$ denote the cofree coalgebra on the space $L(A,B)$ of linear maps from $B$ to $A$ and let $p\colon C \to L(B,A)$ be the canonical projection.  We define a linear map $G\colon B\otimes C\to A$ by $G=\mathrm{ev}\circ (\id\ot p)$, that is, $G(b\otimes c)=p(c)(b)$.  Now we define $\C(B,A)$ to be the largest subcoalgebra of $C$ such that $G|_{B\otimes \C(B,A)}$ is a partial covering and we define $\Omega=G|_{B\otimes \C(B,A)}$.  The existence of such a coalgebra follows from 
the following observation: if $(X_i)_{i\in I}$ is a family of subcoalgebras of $C$ such that $G|_{B\otimes X_i}$ are partial coverings, then for $X=\sum_i X_i$, the map $G|_{B\otimes X}$ is also a partial covering (as being a partial covering 
is described in terms of homogeneous linear equations).  Now suppose that $f\colon B\ot C_f\to A$ is a partial covering.  Let $q\colon C_f\to L(B,A)$ be the corresponding map, that is, $q(c)(b)=f(b,c)$.  By the cofreeness of $p\colon C\to L(B,A)$, there is a unique coalgebra map $F\colon C_f\to C$ such that $q=p\circ F$.  Now note that $G|_{B\otimes F(C_f)}$ is a partial covering and hence $F$ can be viewed as a map from $C_f$ to $\C(B,A)$.  Uniqueness of $F$ follows from the construction (coalgebra maps $F$ from $C_f$ to $\C(B,A)$ satisfying $f=\Omega\circ(\id\ot F)$ correspond injectively to coalgebra maps from  $C_f$ to $C$ satisfying $q=p\circ F$).
\end{proof}

\begin{rema}
In \cite{GM:06, GM:07}, Grunenfelder and Mastnak considered a generalization of measurings to ``bimeasurings'' in the case where $B$ is a bialgebra and the algebra $A$ is commutative.  Among other things they show that the universal measuring coalgebra $\mathcal M(B,A)$ carries a natural structure of a bialgebra. Their argument readily carries over to the present setting of partial coverings. We sketch the proof here.
\end{rema}

Suppose $B$ is cocommutative and $A$ is commutative. Define a partial covering $\omega_2\colon B\otimes \C(B,A)\otimes \C(B,A)\to A$ by $\omega_2 = \m_A(\omega\ot\omega)(\id\ot\tau\ot\id)(\De_B\ot \id\ot\id)$ (here $\tau\colon B\ot \C(B,A)\to \C(B,A)\ot A$ denotes the usual twist), that is, $\omega_2(b,x\otimes y)=\Omega(b_1,x)\Omega(b_2,y)$.  Check that $\omega_2$ is a partial covering (cocommutativity of $B$ is needed to ensure that $\De_B$ is a coalgebra map; commutativity of $A$ is needed to ensure that $m_A$ is an algebra map; conclude $\omega_2$ is a measuring). Now define $\m\colon \C(B,A)\otimes \C(B,A)\to \C(B,A)$ to be the unique coalgebra map such that $\omega_2=\Omega(\id\ot\m)$.  The associativity of $\m$ follows from the fact that $\m(\id\ot\m)$ and $\m(\m\ot\id)$ are unique coalgebra maps such that $\omega_3 = \Omega(\id\ot \m(\id\ot\m))$, where $\omega_3\colon B\ot \C(B,A)\ot \C(B,A)\ot \C(B,A)\to A$ is given by $\omega_3(b,x\ot y\ot z)=\Omega(b_1,x)\Omega(b_2,y)\Omega(b_3,z)$.  The unit $u\colon \k\to \C(B,A)$ is the unique coalgebra map such that $\omega_0 = \Omega(\id\ot u)$, where $\omega_0=\ep\colon B\ot \k\to A$. 
This bialgebra structure on $\C(B,A)$ is universal in the following sense.

\begin{prop}\label{th: universal-bialg}
Let $A$ and $B$ be bialgebras with $A$ commutative and $B$ cocommutative. Then the universal covering coalgebra $\C(B,A)$ is a bialgebra satisfying the following universal property: if $C$ is a bialgebra and $f\colon B\ot C\to A$ is a partial bicovering (that is, it may be viewed as either a partial covering of $A$ by $B$ or a partial covering of $A$ by $C$), then there is a unique bialgebra map $F\colon C\to \C(B,A)$ such that $f=\Omega\circ(\id\ot F)$.
\qed
\end{prop}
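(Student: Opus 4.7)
The plan is to leverage the universal property of $\C(B,A)$ as a coalgebra (Theorem \ref{th: universal-coalg}) to produce $F$ automatically as a coalgebra map, then reduce the bialgebra compatibility of $F$ to two further applications of the same universal property. Uniqueness of $F$ as a bialgebra map will then come for free: any bialgebra map satisfying $f = \Omega \circ (\id \otimes F)$ is, in particular, a coalgebra map with that property, and is therefore unique by Theorem \ref{th: universal-coalg}.

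For multiplicativity, I would compare the two coalgebra maps $F \circ \m_C$ and $\m \circ (F \otimes F)$ from $C \otimes C$ to $\C(B,A)$, where $\m$ is the product on $\C(B,A)$ constructed in the paragraph preceding the statement. The strategy is to exhibit a single partial covering $\omega_2^C \colon B \otimes (C \otimes C) \to A$ and show that the assignment $G \mapsto \Omega \circ (\id \otimes G)$ sends both candidates to $\omega_2^C$; the uniqueness clause of Theorem \ref{th: universal-coalg} then forces the two candidates to agree. The natural choice is $\omega_2^C(b, c \otimes c') = \sum f(b_1, c) f(b_2, c')$. Unwinding the definition of $\m$ on $\C(B,A)$ gives $\Omega \circ (\id \otimes \m \circ (F \otimes F)) = \omega_2^C$ at once. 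For the other candidate I compute $\Omega \circ (\id \otimes F \circ \m_C)(b, c \otimes c') = f(b, cc')$, which equals $\omega_2^C(b, c \otimes c')$ because $f$ is a measuring in the $C$-variable by the bicovering hypothesis. The unit law is handled by the same trick applied to $\omega_0 = \ep \colon B \otimes \k \to A$: both $F \circ u_C$ and $u_{\C(B,A)}$ are coalgebra maps $\k \to \C(B,A)$, and the identity $f(b, 1_C) = \ep(b)$ (again from the $C$-side measuring) ensures that both compose with $\Omega \circ (\id \otimes -)$ to $\omega_0$.

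The main obstacle I anticipate is not in the universal-property bookkeeping but in verifying that $\omega_2^C$ is genuinely a partial covering of $A$ by $B$ with indexing coalgebra $C \otimes C$, as opposed to merely a linear map. Concretely, one must check that $\omega_2^C$ is both a measuring in the $B$-variable and a coalgebra map. The argument here should parallel the sketch preceding the statement that established the analogous fact for $\omega_2 \colon B \otimes \C(B,A) \otimes \C(B,A) \to A$: cocommutativity of $B$ is required so that $\De_B$ is a coalgebra map, and commutativity of $A$ is required so that $\m_A$ is an algebra map. Once this routine check is in place, the rest of the argument proceeds purely by the universal property with no further computation.
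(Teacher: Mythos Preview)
Your proposal is correct and is precisely the argument the paper has in mind. The paper itself offers no proof beyond the \qed, relying on the reader to extend the sketch that built $\m$ and $u$ on $\C(B,A)$; your plan---produce $F$ via Theorem \ref{th: universal-coalg}, then force $F\circ\m_C=\m\circ(F\ot F)$ and $F\circ u_C=u$ by exhibiting the common partial coverings $\omega_2^C$ and $\omega_0$ and invoking uniqueness---is exactly that extension, and your identification of the one nontrivial check (that $\omega_2^C$ is a partial covering, using cocommutativity of $B$ and commutativity of $A$) matches the paper's own remark about $\omega_2$.
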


\begin{rema}
The article \cite{HFV:17} may be relevant here.
In the case that $A,B$ are cocommutative, it seems our $\C(B,A)$ reduces to the authors' \emph{universal measuring comonoid in the category of coalgebras $P(B,A)$.} Theorem 10.11 {\it loc. cit.} provides that if $A$ is furthermore commutative and $B$ is Hopf, then $\C(B,A)$ is also Hopf. We note, in contrast, that $P(B,A)$ is always cocommutative, while that is not the case for $\C(B,A)$; cf. Example \ref{ex: cocomm-cover}.
\end{rema}

\begin{prop} Let $A,B$ be bialgebras.  Then there exits a covering $F\colon B\covers A$ if and only if the universal partial covering $\Omega\colon B\ot \C(B,A) \to A$ is surjective. \qed
\end{prop}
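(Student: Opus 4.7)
The plan is to prove both implications directly from the universal property of $\C(B,A)$ established in Theorem \ref{th: universal-coalg}, together with the definition of a covering as a surjective partial covering.

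For the ``if'' direction, suppose $\Omega\colon B\otimes \C(B,A)\to A$ is surjective. Since $\Omega$ is by construction a partial covering, surjectivity promotes it to a covering; taking $F=\Omega$ (with indexing coalgebra $C_F = \C(B,A)$) gives the desired covering $B\covers A$. This step is essentially a tautology once the universal object has been constructed.

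For the ``only if'' direction, assume a covering $F\colon B\otimes C_F\to A$ exists. Then $F$ is in particular a partial covering, so Theorem \ref{th: universal-coalg} supplies a (unique) coalgebra map $\tilde F\colon C_F\to \C(B,A)$ making the factorization
\begin{equation*}
F \;=\; \Omega\circ (\id_B\otimes \tilde F)
\end{equation*}
hold. Since $F$ is surjective onto $A$, every element of $A$ lies in the image of $\Omega\circ(\id_B\otimes \tilde F)$, which is contained in the image of $\Omega$. Hence $\Omega$ is surjective.

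I expect no real obstacle here: both implications are formal consequences of the universal property. The only point that deserves a moment of care is noting that surjectivity of a composition forces surjectivity of its last factor, which is what lets us deduce surjectivity of $\Omega$ from that of $F$ in the second half of the argument.
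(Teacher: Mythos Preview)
Your proof is correct and follows exactly the natural argument the paper intends: the statement is marked with \qed\ and no proof is given, precisely because both directions are immediate from the universal property of $\C(B,A)$ together with the definition of covering as surjective partial covering. Your write-up makes explicit the one-line reasoning the authors left implicit.
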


\subsection{Covering equivalence}

We define two $1$-cells in $\Cov(B,A)$ to be \demph{covering equivalent} if they factor surjectively through another $1$-cell, {\it i.e.}, $f\sim g$ if there exists a covering $h\in \Cov(B,A)$ and surjective morphisms of coverings $\mathbf t\colon f\to h$ and $\mathbf s\colon g\to h$. If instead $f,g\in\PCov(B,A)$, we further stipulate that the ranges of $f$, $g$, and $h$ coincide. 

This is indeed an equivalence relation. Symmetry is obvious. Transitivity is seen by using the pushout construction: if $f_1,f_2$ factor surjectively through $g$ and $f_2, f_3$ factor surjectively through $h$, then $f_1, f_3$ factor surjectively through the pushout $g\oplus_{f_2} h$.

\begin{rema}
In Section \ref{sec: universal coverings} we show that given bialgebras $B,A$, there exists a universal partial-covering $\Omega\colon B\otimes \C(B,A)\to A$ that satisfies the property that any other partial covering factors uniquely through $\Omega$.  It is easy to see that two partial coverings $f,g\in \PCov(B,A)$ are equivalent if and only if the images of their coalgebras $C_f, C_g$ in $\C(B,A)$ coincide.  This observation also gives an alternative prove that the equivalence of coverings above is an equivalence relation.
\end{rema}

\begin{exam} Here we identify two partial coverings that are equivalent but not isomorphic.  Let $\varphi\colon B\to A$ be a bialgebra homomorphism. Let coalgebras $C, D$ be given by $C=\k x\oplus \k y$, $D=\k z$, where $x,y,z$ are points. Define partial coverings (they are coverings if $\varphi$ is surjective) $f\colon B\otimes D\to A$, $g\colon B\otimes C\to A$ by $f(b\otimes x)=f(b\otimes y)=g(b\otimes z)=\varphi(b)$. Then $f$ factors surjectively through $g$ via the coalgebra map $\mathbf t\colon C\to D$ given by $\mathbf t(x)=\mathbf t(y)=z$.
\end{exam}

Let us call a partial covering $f$ \demph{non-degenerate} if the linear map $f(\arg,x)\colon B\to A$
is nonzero for every nonzero $x\in C_f$. Observe that if $f$ is non-degenerate and factors surjectively through $h$, then $f$ and $h$ are isomorphic. Hence two non-degenerate coverings $f,g$ are equivalent if and only if they are isomorphic.

\begin{ques}
Is everything partial covering equivalent to a non-degenerate one? 
\end{ques}

\subsection{Transfer of structure}
We discuss some useful applications of coverings.

\begin{theo}\label{th: transfer} 
Let ${f} \colon B\ot C \to A$ be a partial covering, with image $A'$. Let $\iota \colon A'\to B\ot C$ be any linear section of ${f}$, that is, ${f}\circ \iota = \id$.
Then the following hold.
\begin{enumerate}
\item If $p\in B$ is primitive then for any $c\in C$, ${f}(p,c)$ is primitive in $A'$.
\item If $A'$ and $B$ are Hopf algebras, then their antipodes are related by the formula $S_{A'} = {f}\circ (S_B\ot\id)\circ \iota$.
\end{enumerate}
\end{theo}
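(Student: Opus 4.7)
The plan is to dispatch (1) by a direct computation using that $f$ is simultaneously a coalgebra map and a measuring, and to handle (2) by the following maneuver: prove the \emph{global} identity
$f\circ (S_B\ot\id_C) \;=\; S_{A'}\circ f$
as linear maps $B\ot C\to A'$, and then simply apply it to $\iota(a)$. This way the section $\iota$ is only invoked at the very end, where we may use $f\circ\iota=\id_{A'}$ directly. Along the way we note that the image $A'$ is a sub-bialgebra of $A$ (closure under multiplication follows from the measuring axiom, closure under comultiplication from $f$ being a coalgebra map, and $1_A = f(1_B,c)/\epsilon(c)$ lies in $A'$), so that $S_{A'}$ makes sense and $\Hom(B\ot C, A')$ is a convolution algebra.

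For part (1), fix a primitive $p\in B$ and $c\in C$. Expanding
$\Delta_{B\ot C}(p\ot c) = \sum_{(c)}\bigl[(1\ot c_1)\ot(p\ot c_2) + (p\ot c_1)\ot(1\ot c_2)\bigr]$,
applying $(f\ot f)$ (permissible since $f$ is a coalgebra map), and using the measuring identity $f(1\ot c_i) = \epsilon(c_i)\,1_A$ together with the counit axiom $\sum\epsilon(c_1)c_2 = c = \sum c_1\epsilon(c_2)$, one reads off $\Delta_A f(p,c) = 1\ot f(p,c) + f(p,c)\ot 1$. Hence $f(p,c)$ is primitive in $A$, and a fortiori in the sub-bialgebra $A'$.

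For part (2), I will show that $f\circ(S_B\ot\id_C)$ and $S_{A'}\circ f$ are both two-sided convolution inverses of $f$ inside $\Hom(B\ot C, A')$; uniqueness of convolution inverses then identifies them. For the first, compute
$\bigl(f \ast f(S_B\ot\id)\bigr)(b\ot c) = \sum_{(b),(c)} f(b_1,c_1)\,f(S(b_2),c_2)$
and apply the measuring axiom in reverse to fold each $c$-sum into $f(b_1 S(b_2), c)$, obtaining $f\bigl(\sum b_1 S(b_2), c\bigr) = \epsilon(b)\,f(1,c) = \epsilon(b)\epsilon(c)\,1_{A'}$; the symmetric convolution with $f$ on the right is analogous. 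For $S_{A'}\circ f$, the fact that $f$ is a coalgebra map gives
$\sum_{(b),(c)} f(b_1,c_1)\,S_{A'}(f(b_2,c_2)) \;=\; \sum (f(b,c))_{(1)}\,S_{A'}\bigl((f(b,c))_{(2)}\bigr) \;=\; \epsilon_{A'}(f(b,c))\,1 \;=\; \epsilon(b)\epsilon(c)\,1,$
using the antipode axiom in $A'$ and $\epsilon_A\circ f = \epsilon_{B\ot C}$. Uniqueness of convolution inverses in $\Hom(B\ot C, A')$ therefore forces $f\circ(S_B\ot\id_C) = S_{A'}\circ f$. Evaluating both sides on $\iota(a)$ and using $f\iota = \id_{A'}$ gives the claimed formula $S_{A'}(a) = f\bigl((S_B\ot\id)\iota(a)\bigr)$.

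The apparent difficulty is that $\iota$ is only assumed linear, so one cannot usefully expand $\Delta_{A'}\iota(a)$ or commute $\iota$ past $\Delta$; a naive attempt to verify the antipode axiom directly for the composite $f(S_B\ot\id)\iota$ stalls here. The argument above circumvents this obstacle by keeping $\iota$ out of the picture until after the coalgebra-level identity has been established, at which point every choice of section produces the same answer.
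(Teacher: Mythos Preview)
Your argument is essentially the paper's: part (1) by direct computation, part (2) by showing that both $S_{A'}\circ f$ and $f\circ(S_B\ot\id)$ are convolution inverses of $f$ in $\Hom(B\ot C,A')$ and then composing with $\iota$ only at the last step.

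One correction to your preliminary remark: the image $A'$ is \emph{not} automatically a subalgebra of $A$. The measuring axiom gives $\sum_{(c)} f(b,c_1)f(b',c_2)=f(bb',c)\in A'$, but it says nothing about an arbitrary product $f(b,c)\,f(b',c')$. For a concrete failure, take $B=\k[x]$ with $x$ primitive, $C=\k c\oplus\k c'$ with $c,c'$ grouplike, $A=\k\langle a,b\rangle$ with $a,b$ primitive, and $f(x,c)=a$, $f(x,c')=b$; then $f$ is a partial covering (a direct sum of two bialgebra maps) whose image is $\mathrm{span}\{a^n,b^n:n\ge 0\}$, which does not contain $ab$. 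This does not harm your proof: in part (2) the hypothesis that $A'$ is a Hopf algebra already provides the algebra structure you need for the convolution product on $\Hom(B\ot C,A')$, and the paper handles this by simply assuming $A'=A$ without loss of generality. Just drop the claim that closure under multiplication follows from the measuring axiom.
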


\begin{proof} The proof of (1) is a simple calculation:
\begin{align*}
\Delta {f}(p\ot c) &= ({f}\ot {f})\Delta (p\ot c) \\
&= \sum_{(c)}{f}(p\ot c_1)\ot {f}(1\ot c_2) + \sum_{(c)}{f}(1\ot c_1)\ot {f}(p\ot c_2) \\
&= \sum_{(c)}{f}(p\ot c_1)\ot \ep(c_2) + \sum_{(c)}\ep(c_1)\ot {f}(p\ot c_2) \\
&= {f}(p\ot c)\ot 1 + 1\ot {f}(p\ot c).
\end{align*}

In the proof of (2), we assume without loss of generality that $A'=A$. 

We first show that $S_A {f} = {f}(S_B\ot\id)$. We do this by showing that each is the convolution inverse of ${f}$ in $\mathop{\mathrm{Hom}}_k(B\otimes C, A)$.

Suppose ${f}(x)=a$. Then
   \begin{align*}
   (S_A{f} * {f})(x) &= \sum_{(x)}S_A({f}(x_1)) \,{f}(x_2)  = \sum_{({f}(x))} S_A({f}(x)_1) \, {f}(x)_2 = \sum_{(a)} S_A(a_1) \, a_2 \\
   &= \ep (a) = \ep(x) \,,
\end{align*}
since ${f}$ is a coalgebra map. Similarly, for $b\otimes c \in B\otimes C$, we have
\begin{align*}
   ({f}(S_B\ot\id)*{f})(b\ot c) &= \sum_{(b),(c)} {f}(S_B(b_1)\ot c_1){f}(b_2\ot c_2) = \sum_{(b)} {f}(S_B(b_1)b_2\ot c) \,, \\
   &= {f}(\ep(b)\ot c) = \ep(b) {f}(1\ot c) = \ep(b)\ep(c) \\
   & = \ep(b\ot c) \,,
\end{align*}
where the last equalities on the first and second line use the fact that ${f}$ measures $B$ to $A$. The proof that $S_A {f}$ and ${f}(S_B\ot\id)$ are both right convolution inverses of ${f}$ is similar.  

We conclude that ${f}$ is convolution invertible and that $S_A {f} = {f}(S_B\ot\id)$. This implies that ${f}(S_B\otimes\id)\iota = S_A{f}\iota = S_A$, as needed. In particular, ${f}(S_B\otimes\id)\iota$ is well-defined and independent of the choice of $\iota$.
\end{proof}


Let ${f}\colon B\ot C\to A$ be a covering.  Let $P(B)$ and $P(A)$ denote the subspaces of primitive elements in $B$ and $A$.   One of the important aspects of coverings is that ${f}(P(B)\ot C)\subseteq P(A)$ (just proven above) and hence also ${f}(\mathcal{U}P(B)\ot C)\subseteq \mathcal{U}P(A)$ (here $\mathcal{U}P(B)$ stands for the Hopf subalgebra of $B$ generated by $P(B)$; in characteristic $0$ it is isomorphic to the universal envelope of the Lie algebra $P(B)$).  In a sense this means that ${f}$ preserves the Lie-theoretic aspects of $B$.  

\begin{rema} In contrast to the preceding paragraph, coverings $f\colon B\covers A$ need not respect the group-theoretic aspects of $B$. That is, if $g$ is a grouplike element in $B$, then  ${f}(g,\arg)$ may not even land in the span of the grouplikes of $A$. See Example \ref{ex: comm over noncomm} for one such example.
\end{rema}

We now argue that coverings should be viewed as moving between equations in convolution algebras. Fix a measuring $f \colon B \ot C \to A$. Then any algebra map $\chi\in \Alg(A,\k)$ gives rise to an element ${}^{f\!}\chi \in\Alg(B,C^*)$. Indeed, putting $\langle {}^{f\!}\chi(b),c\rangle := (\chi f)(b\ot c)$, we have 
\begin{align*}
	\langle {}^{f\!}\chi(bb'),c\rangle 
	&= (\chi f)(bb'\otimes c) = \chi\sum f(b,c_1)\,f(b',c_2) \\
	&= \sum \chi f(b,c_1)\, \chi f(b',c_2) 
	= \langle {}^{f\!}\chi(b),c_1\rangle \langle {}^{f\!}\chi(b'),c_2\rangle \\
	&= \langle {}^{f\!}\chi(b) \cdot {}^{f\!}\chi (b'), c\rangle .
\end{align*}
Now, if $A$ and $B$ are bialgebras then $\Alg(A,\k)$ and $\Alg(B,C^*)$ are monoids, under the convolution product. So properties of $\chi$ are informed by properties of ${}^{f\!}\chi$, and vice versa. 
We illustrate with some explicit examples.

\begin{exam}
Let $B$ be a filtered Hopf algebra. There is a well-known formula for the antipode in the convolution algebra $\Hom(B,B)$ due to Takeuchi. The fact that primitives may be seen as solutions to a similar formula seems to be less appreciated: if $B$ is connected and cocommutative, then the \emph{first Eulerian idempotent}  $\mathsf{e}_1 := \log(\id - \ep) \in \Hom(B,B)$ is a projection onto $P(B)$. (See \cite[Ch. 4]{Lod:92} for a proof.)
%
\end{exam}

\begin{exam}
If ${}^{f\!}\chi$ has order $n$, then the order of $\chi$ divides $n$. Indeed, if $b$ has exponent $n$, then $f(b,c)$ has exponent dividing $n$, for all $c\in C$. 
\end{exam}

\section{Cocommutative and Graded Coverings}\label{sec: subcategories} 

We investigate subcategories of $\pcbialg$.

\subsection{Cocommutative coverings}\label{sec: cocomm}

We now remark that partial coverings $f\colon B\ot C \to A$ with $C$ cocommutative are less restrictive than those with $C$ non-cocommutative. 
Indeed, by the measuring and coalgebra-map properties, we have
\begin{align}
\notag	\left(f^{\otimes2} \Delta\right)\,(bb'\ot c) 
	&= f^{\otimes2} \left[  \left(b_1b'_1 \ot c_1 \right) \otimes \left(b_2b'_2 \ot c_2\right) \right] \\
\label{eq: Delta-f}
	&= f(b_1,c_1)\,f(b'_1,c_2) \ot f(b_2,c_3)\,f(b'_2,c_4) ,
\intertext{while}
\label{eq: f-Delta}
	\left(\Delta f\right)(bb'\ot c) 
	&= f(b_1,c_1)\,f(b'_1,c_3) \ot f(b_2,c_2)\,f(b'_2,c_4) .
\end{align}
Note the twist $(c_2,c_3) \leadsto (c_3,c_2)$ above, so that $C$ behaves under $f$ somewhat as if it were cocommutative. If $B$ is Hopf, then it behaves precisely so, as we now show.
\begin{lemm}\label{lem: Ccoc}
If $f\colon B\otimes C\to A$ is a partial covering with $B$ Hopf, then for all $b,b'\in B$ and $c\in C$ we have that
$$
f(b',c_2)\otimes f(b, c_1)= f(b', c_1)\otimes f(b,c_2).
$$
\end{lemm}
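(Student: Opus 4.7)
For the first step, I would equate the two expressions \eqref{eq: Delta-f} and \eqref{eq: f-Delta}---each of which computes $\Delta f(bb'\otimes c)$, respectively via the coalgebra-map property followed by measuring, and via measuring followed by applying $\Delta$ to each factor. This immediately yields the identity
\begin{equation*}
\sum f(b_1,c_1) f(b'_1,c_2) \otimes f(b_2,c_3) f(b'_2,c_4) = \sum f(b_1,c_1) f(b'_1,c_3) \otimes f(b_2,c_2) f(b'_2,c_4), \tag{$\ast$}
\end{equation*}
encoding the partial ``$(c_2,c_3)\leadsto(c_3,c_2)$ swap'' noted just before the lemma. This step uses only the bialgebra structure of $B$.

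For the second step, since $B$ is Hopf, the measuring property combined with the antipode axiom $\sum b_1 S(b_2) = \varepsilon(b)\cdot 1$ gives $\sum f(b_1, c_1) f(S(b_2), c_2) = \varepsilon(b)\varepsilon(c)$; that is, $\bar f(b, c) := f(S(b), c)$ is the two-sided convolution inverse of $f$ in the convolution algebra $\Hom(B\otimes C, A)$. The strategy is to use this inverse to ``strip off'' the outer factors $f(b_1, c_1)$ and $f(b'_2, c_4)$ that appear identically on both sides of $(\ast)$. Concretely, I would introduce fresh Sweedler indices on $c$ via coassociativity, multiply the appropriate tensor slots of $(\ast)$ by $\bar f$-terms with these new indices, invoke measuring to combine the resulting products, and then collapse the extra factors via the antipode axiom and the counit. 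What survives is exactly the identity $f(b, c_1)\otimes f(b', c_2) = f(b, c_2)\otimes f(b', c_1)$, and swapping tensor factors in $A\otimes A$ gives the claim.

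An equivalent and perhaps cleaner way to think about this second step is to view $(\ast)$ as the assertion that the map $p\ast q \colon B \to \Hom(C, A\otimes A)$ is an algebra homomorphism, where $p(b)(c) := f(b,c)\otimes 1$ and $q(b')(c) := 1\otimes f(b', c)$ are themselves algebra homomorphisms into the convolution algebra; swapping tensor factors in $A\otimes A$ shows that $q\ast p$ is likewise an algebra homomorphism. The claim is precisely the pointwise commutativity of the images of $p$ and $q$ in $\Hom(C, A\otimes A)$, and the Hopf structure on $B$ is what promotes the algebra-map property of $p\ast q$ and $q\ast p$ to this pointwise commutativity (via convolution inverses $p \circ S_B$, $q \circ S_B$).

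The main obstacle in either formulation is the careful Sweedler-index bookkeeping: multiple applications of coassociativity and counit are needed to align indices before the antipode axiom can take effect. The Hopf assumption on $B$ is essential---without the convolution inverse $\bar f$, the stripping is impossible, and indeed the conclusion is genuinely stronger than $(\ast)$ alone (consistent with the paper's remark that $(\ast)$ alone makes $C$ behave only ``somewhat'' cocommutatively under $f$).
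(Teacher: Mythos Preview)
Your proposal is correct and follows essentially the same route as the paper. The paper's proof executes precisely the ``strip off the outer factors'' manoeuvre you describe, though written in the inverse direction: it starts from $f(b',c_1)\otimes f(b,c_2)$, pads each side with a counit (which is then expanded via the antipode axiom and measuring into the missing outer factors), applies the identity $(\ast)$ to the resulting six-index expression, and collapses back. Your alternative framing via algebra maps $p,q\colon B\to \Hom(C,A\otimes A)$ is a clean conceptual repackaging of the same computation, but not a genuinely different argument.
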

\begin{proof} In the equality between \eqref{eq: Delta-f} and \eqref{eq: f-Delta}, the antipode offers a means to cancel the edges. Using this equality between the fourth and fifth line below, we compute:
\begin{eqnarray*}
f(b',c_1)\otimes f(b,c_2) &=& \ep(b_1\ot c_1)f(b'_1, c_2)\ot f(b_2,c_3)\ep(b'_2\ot c_4) \\
&=& f(\ep(b_1), c_1)f(b'_1,c_2)\ot f(b_2,c_3)f(\ep(b'_2), c_4) \\
&=& f(S(b_1)b_2, c_1)f(b'_1,c_2)\ot f(b_3,c_3)f(b'_2S(b'_3), c_4)\\
&=& f(S(b_1), c_1)f(b_2, c_2)f(b'_1,c_3)\ot f(b_1,c_4)f(b'_2, c_5)f(S(b'_3), c_6)\\
&=& f(S(b_1), c_1)f(b_2, c_2)f(b'_1,c_4)\ot f(b_3,c_3)f(b'_2, c_5)f(S(b'_3), c_6)\\
&=& f(S(b_1)b_2, c_1)f(b'_1,c_3)\ot f(b_3,c_2)f(b'_2S(b'_3), c_4)\\
&=& \ep(b_1)\ep(c_1)f(b'_1,c_3)\ot f(b_2,c_2)\ep(b'_2)\ep(c_4)\\
&=& f(b', c_2)\otimes f(b,c_1).
\end{eqnarray*}
\end{proof}

\begin{prop}\label{th: cocomm}
If $B \pcovers A$ is a partial covering with $B$ Hopf, then the image of any cocommutative element of $B$ is cocommutative in $A$.
\end{prop}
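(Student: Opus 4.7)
The plan is to unpack cocommutativity of $f(b,c)$ using the coalgebra-map property of $f$, then shuffle the indices using cocommutativity of $b \in B$ in the $B$-slot and Lemma \ref{lem: Ccoc} in the $C$-slot. Specifically, cocommutativity of $b$ says $\sum b_1 \otimes b_2 = \sum b_2 \otimes b_1$, and the lemma (applicable because $B$ is Hopf) provides a partial ``cocommutativity'' of $C$ \emph{as it appears through $f$}.

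First I would fix $c \in C$ and compute $\Delta_A f(b \otimes c) = \sum f(b_1, c_1) \otimes f(b_2, c_2)$ using the fact that $f$ is a coalgebra map. The goal is to show this equals $\sum f(b_2, c_2) \otimes f(b_1, c_1)$. Applying cocommutativity of $b$ on the $B$-tensor factors yields
\[
\sum f(b_1, c_1) \otimes f(b_2, c_2) = \sum f(b_2, c_1) \otimes f(b_1, c_2).
\]
Then I would invoke Lemma \ref{lem: Ccoc} with the roles $(b',b) \leadsto (b_2, b_1)$, which gives $f(b_2, c_1) \otimes f(b_1, c_2) = f(b_2, c_2) \otimes f(b_1, c_1)$ after summation. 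Chaining the two equalities completes the proof.

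I do not expect a significant obstacle: the result is essentially a direct corollary of Lemma \ref{lem: Ccoc}, once one writes down the coalgebra-map expansion of $\Delta_A f(b\otimes c)$ and sees that cocommutativity of $b$ swaps the $B$-labels while the lemma swaps the $C$-labels, together producing the desired full swap. The only care needed is bookkeeping of the Sweedler indices and ensuring the lemma is applied with the correct pair of elements inside the sum.
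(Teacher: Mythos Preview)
Your proof is correct and essentially the same as the paper's: both expand $\Delta_A f(b\otimes c)$ via the coalgebra-map property, then perform one swap using cocommutativity of $b$ and one swap using Lemma~\ref{lem: Ccoc}. The only (immaterial) difference is the order: the paper applies the lemma first and then cocommutativity of $b$, while you do the reverse.
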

\begin{proof}
Applying Lemma \ref{lem: Ccoc}, we may write $\Delta{f}(b,c)$ as
\begin{eqnarray*}
{f}(b,c)_1\ot {f}(b,c)_2 &=& {f}(b_1,c_1)\ot {f}(b_2,c_2) = f(b_1,c_2)\ot f(b_2,c_1)\\
&=& {f}(b_2,c_2)\ot {f}(b_1,c_1) = {f}(b,c)_2\ot {f}(b,c)_1 ,
\end{eqnarray*}
which completes the proof.
\end{proof}

Below we provide an example showing that the Hopf condition in Proposition \ref{th: cocomm} (and in Lemma \ref{lem: Ccoc}) is not vacuous. 
That is, there are interesting coverings by cocommutative $B$'s and non-cocommutative $C$'s. It therefore follows (from Example \ref{ex: cocomm-cover}) that the universal covering coalgebra need not always be cocommutative.  Whether this statement holds when $B$ is Hopf is an open question.
\begin{ques} \label{q:cocommutative}
Given any partial covering ${f}\colon B p\covers A$, with $B$ a Hopf algebra, does it factor through a partial covering ${f}'\colon B\ot C'\to A$ with $C'$ cocommutative? Equivalently, is the universal covering coalgebra $\mathcal{C}(B,A)$ cocommutative whenever $B$ is Hopf?
\end{ques}
Along the same lines, we may also ask the following.
\begin{ques} 
Given any partial covering ${f}\colon B\pcovers A$, does there exist a cocommutative $C'$ and partial covering ${f}'\colon B\ot C'\to A$ with the same range as ${f}$?  What if we additionally assume that 
$B$ is a Hopf algebra?
\end{ques}

\begin{exam}\label{ex: cocomm-cover}
Let $\{a,b\}$ be the semigroup with $xy = y$ for all $x,y\in\{a,b\}$, and let $M=\{e,a,b\}$ be the corresponding (nonabelian and non-cancelable) monoid. Put $A = (\k M)^*$ and $C=\coalg(A)$. Writing $\{x_i\}_{i=e,a,b}$ for the dual basis in $A$ and putting $\rho:=x_e + x_a + x_b$, one checks that
\[
x_i \cdot x_j = \delta_{ij} x_j \ \ (\rho=1_A), 
	\quad \Delta(x_e) = x_e\otimes x_e ,
	\qand \Delta(x_i) = 1\otimes x_i + x_i \otimes x_e \ (i\neq e).
\]
We construct a covering $f\colon B\ot C \onto A$.

Give $B:=\k[z]$ the bialgebra structure with $z$ a point, where by \demph{point} we mean a grouplike element. (We adopt this name to avoid the connotation of invertibility.)
Define $f$ by sending $(z^n,c)$ to the Sweedler power $\id^{*n}(c)$ for $n\geq0$ and extending bilinearly. Then $f(z,c) = c$, meaning $f$ is onto. Note that $f$ is a measuring by fiat. We check that $f$ is a coalgebra map. In the present context, \eqref{eq: Delta-f} and \eqref{eq: f-Delta} requires the equality 
\begin{gather}\label{eq: ncc-example}
(c_1c_3c_5\cdots) \otimes (c_2c_4c_{6}\cdots) = (c_1c_2c_3\cdots) \otimes (c_{n+1}c_{n+2}c_{n+3}\cdots) .
\end{gather}
This clearly holds for $c=x_e$, so we turn to $x_i$ for $i\neq e$. 
The general formula for iterated coproducts is
\[
\Delta^n(x_i) = 1^{\ot n}\ot x_i + 1^{\ot n-1}\ot x_i\ot x_e + 1^{\ot n-2}\ot x_i\ot x_e^{\ot2} + 
\cdots x_i \ot x_e^{\ot n}.
\]
Hence the only products of $n$ factors (with or without interleaving the odd and even terms) that survive in \eqref{eq: ncc-example} are $1\ot x_i$ and $x_i\ot x_e$, and these each occur exactly once on either side.
\end{exam}

\subsection{Graded coverings}\label{sec: combinat}

If $A,B$ are graded (bi)algebras, then a measuring $f\colon B\ot C \to A$ is called \demph{graded} if $f(b\ot c)_n = f(b_n \ot c)$. We say that $f$ is \demph{locally finite} if for each fixed $c\in C$, $f(b,c) = 0$ for all $b$ of sufficiently large degree. 
Finally, suppose $C$ is also graded. Then we call $f$ \demph{bigraded} if $f(b \ot c)_n = f(b_n \ot c_n)$.

\begin{prop}\label{th: locally finite}
Any graded partial covering $f\colon B \ot C \to A$ of graded bialgebras may be replaced by a locally finite one.
\end{prop}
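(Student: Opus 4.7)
The plan is to replace $C$ by $C \ot D$, where $D$ is an auxiliary coalgebra that tracks the grading on $A$, and to twist $f$ by projection onto graded components. Concretely, I would take $D$ to be the divided-power coalgebra with basis $\{d_n\}_{n\ge 0}$, comultiplication $\De d_n = \sum_{i+j=n} d_i \ot d_j$, and counit $\ep(d_n) = \delta_{n,0}$; grade $D$ by $\deg d_n = n$. Set $C' := C \ot D$ with its tensor coalgebra structure and grading $C'_n = C \ot \k d_n$, and define $f' \colon B \ot C' \to A$ by $f'(b \ot c \ot d_n) := \pi_n f(b,c)$, where $\pi_n \colon A \onto A_n$ is projection onto the degree-$n$ piece. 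Gradedness of $f$ furnishes the alternate formula $f'(b \ot c \ot d_n) = f(b_n, c)$.

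Next I would verify the partial-covering axioms. The measuring condition rests on the basic identity $\pi_n(xy) = \sum_{i+j=n} \pi_i(x)\,\pi_j(y)$ in the graded algebra $A$, which precisely matches the form of $\De d_n$; the unit condition $f'(1 \ot c \ot d_n) = \ep(c)\ep(d_n)$ reduces to $\pi_n \ep(c) = \ep(c)\,\delta_{n,0}$, which holds because $u(\k) \subseteq A_0$. The coalgebra-map property follows from the corresponding property of $f$ together with gradedness of the coproducts on $B$ and $A$: after decomposing $b$ into homogeneous parts, both $\De f'(b \ot c \ot d_n)$ and $\sum f'(b_1, (c \ot d_n)_1) \ot f'(b_2, (c \ot d_n)_2)$ collapse to $\De f(b_n, c)$, since the condition $i+j=n$ from $\De d_n$ is forced by gradedness of $\De_B$ to match $\deg b_1 + \deg b_2$. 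Local finiteness is then immediate from $f'(b, c \ot d_n) = f(b_n, c)$: for fixed $c \ot d_n$, this vanishes on every homogeneous $b \in B_m$ with $m \ne n$, and in particular for all $m > n$. Moreover, $f'$ and $f$ have the same image in $A$, because $\pi_n f(b,c)$ ranges over all graded components of $f(b,c)$ as $n$ varies; hence a covering is replaced by a covering.

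The main conceptual step is identifying the right coalgebra $D$: a grouplike generator (as in Example~\ref{ex: comm over noncomm}) would fail the measuring axiom, since $\pi_n$ is not multiplicative, so one must use the divided-power comultiplication that implements convolution of degrees. Once $D$ is in hand, all remaining verifications are routine bookkeeping with gradings. As a bonus observation, the resulting covering $f'$ is in fact \emph{bigraded} with respect to the grading $C'_n = C \ot \k d_n$, so the argument actually upgrades any graded partial covering to a bigraded one.
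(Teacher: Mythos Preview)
Your proposal is correct and follows exactly the paper's approach: the divided-power coalgebra $D$ you introduce is precisely the coalgebra $\ngens=\k\{H_0,H_1,\ldots\}$ used in the paper, and your $f'(b,c\ot d_n)=\pi_n f(b,c)$ is the paper's $\bar f(b,c\ot H_n)=f(b,c)_n$. The paper merely asserts that the resulting map is a measuring and a coalgebra map, whereas you have spelled out the verifications; your additional remarks on preservation of the image and on bigradedness are correct bonuses not present in the paper's terse proof.
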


\begin{proof}
Let $\ngens$ denote the coalgebra $\k\{H_0,H_1,H_2,\ldots\}$ defined by $\Delta(H_k) = \sum_{i+j=k} H_i \otimes H_j$. Put $\bar C = C \ot \ngens$, and $\bar f(b, c \ot H_n) = f(b, c)_n$, the $n$th homogeneous component of $f(b,c)$. Show that this is an measuring and a coalgebra map.
\end{proof}

\begin{xam}[\ref{ex: A.id over A}]
Suppose $A$ is a graded bialgebra. Applying the construction $(f,\k) \mapsto (\bar f,\k\ot\ngens)$ in the proof above, we see that $\bar f$ not only becomes locally finite, but it becomes degree-preserving as well. Thus every graded bialgebra has a degree-preserving measuring that is a covering. 
\end{xam}

If $U,V, W$ are graded vector spaces, then we define the \demph{diagonal tensor product} by $U\otimes_d V := U\otimes V/(\bigoplus_{n\not=m} U_n\otimes V_m)$. The space $U\otimes_d V$ is graded by $(u\otimes_d v)_n = u_n\otimes v_n$.
Note that bigraded maps from $U\otimes V\to W$ are in bijective correspondence with graded (that is, homogeneous of degree $0$) maps from $U\otimes_d V\to W$. 

\begin{lemm}
If $C,D$ are graded coalgebras, then $C\otimes_d D$ is a graded coalgebra with coproduct given by $\Delta(u\otimes_d v)=
(u_1\otimes_d v_1)\otimes (u_2\otimes_d v_2)$ and counit given by $\ep(u\otimes_d v) = \ep(u)\ep(v)$.
\end{lemm}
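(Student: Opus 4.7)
The plan is to build the coproduct and counit on the full tensor product $C\otimes D$ first, and then show they descend to the quotient $C\otimes_d D = (C\otimes D)/N$, where $N := \bigoplus_{n\neq m} C_n\otimes D_m$. Let $\pi \colon C\otimes D \to C\otimes_d D$ denote the canonical projection, and set
\[
\widetilde{\Delta} \,=\, (\pi\ot\pi) \circ (\id\ot\tau\ot\id) \circ (\De_C\ot \De_D)
\colon C\ot D \to (C\ot_d D)\ot(C\ot_d D),
\]
where $\tau$ is the flip, together with $\widetilde{\ep} := \ep_C\ot\ep_D \colon C\ot D\to \k$. I would then show that both $\widetilde{\Delta}$ and $\widetilde{\ep}$ vanish on $N$, so that they factor uniquely through $\pi$ to give the desired $\Delta$ and $\ep$ on $C\ot_d D$.

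The key computation is the vanishing on $N$. For $\widetilde{\ep}$ this is immediate: if $u\in C_n$, $v\in D_m$ with $n\neq m$, then at least one of $n,m$ is nonzero and the counit of $C$ or $D$ kills the corresponding factor. For $\widetilde{\Delta}$, use graded-ness to write
\[
\De_C(u) = \sum_{i+j=n} u'_i\ot u''_j, \qquad \De_D(v) = \sum_{k+l=m} v'_k\ot v''_l.
\]
After applying $(\id\ot\tau\ot\id)$ and then $\pi\ot\pi$, a summand $(u'_i\ot v'_k)\ot(u''_j\ot v''_l)$ survives only when both tensor factors are diagonal, i.e. when $i=k$ and $j=l$. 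But those two conditions force $n = i+j = k+l = m$, contradicting $n\neq m$. Hence every summand is killed by $\pi\ot\pi$, so $\widetilde{\Delta}$ descends as required.

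Finally, coassociativity and the counit axiom for $(C\ot_d D,\Delta,\ep)$ follow from the corresponding identities in $C$ and $D$: both $(\Delta\ot\id)\Delta$ and $(\id\ot\Delta)\Delta$ arise (after reshuffling tensor factors and projecting) from $(\De_C\ot\id)\De_C \otimes (\De_D\ot\id)\De_D$ and its right-hand analogue, which agree termwise; similarly the counit identities on $C\ot_d D$ are the diagonal restriction of the counit identities on $C\ot D$. For the grading, a homogeneous element $u\ot_d v$ of degree $n$ (represented by $u_n\ot v_n$) has coproduct whose surviving terms are exactly $(u'_i\ot_d v'_i)\ot(u''_j\ot_d v''_j)$ with $i+j=n$, placing $\Delta$ in $\bigoplus_{i+j=n}(C\ot_d D)_i\ot(C\ot_d D)_j$. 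The whole argument is essentially bookkeeping; the only subtle point is the degree-matching that forces $N$ into the kernel of $\widetilde{\Delta}$, and this is handled uniformly by the contradiction $n=i+j=k+l=m$ above.
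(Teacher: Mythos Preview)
Your proof is correct and follows the same approach as the paper, which simply notes that $\bigoplus_{n\neq m} C_n\otimes D_m$ is a coideal in $C\otimes D$ and leaves the details implicit. Your degree-matching argument that $\widetilde{\Delta}$ and $\widetilde{\ep}$ kill $N$ is exactly the verification of the coideal condition, spelled out in full.
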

\begin{proof}  Observe that $\bigoplus_{n\not=m} C_n\otimes D_m$ is a coideal in $C\otimes D$.
\end{proof}

The bicategory of graded connected bialgebras with bigraded coverings $\combinat$ is now defined in analogy to the category of bialgebras with coverings:  we replace tensors by diagonal tensors and coverings by bigraded locally-finite coverings.

\begin{rema} All bicategories described above also have monoidal structures given by the tensor products of underlying vector spaces.
\end{rema}

\subsection{Universal objects in $\combinat$}\label{sec: universal}

Any cocommutative bialgebra $A$ in $\combinat$ has a canonical covering by $\nsym$, defined as follows. For all $a\in A$, put $\can(H_n,a) = a_n$ (the homogeneous component of $a$ of degree $n$), then extend multiplicatively and bilinearly to make $\can$ a measuring. Checking that $\can$ is a coalgebra map is straightforward. Note that this covering is locally finite by construction.

\begin{prop}\label{th: unique factorization}
Given a covering $f\colon \nsym \ot C \to A$, there is a coalgebra map $ \bar f \colon  C \to \coalg(A)$ factoring $f$  through $\can$ if and only if $C$ is a locally finite covering.
\end{prop}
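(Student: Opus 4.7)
The plan is to construct $\bar f$ explicitly in the ``if'' direction and to inherit local finiteness from $\can$ in the ``only if'' direction.

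For the ``only if'' direction, I would first observe that $\can$ itself is locally finite. For fixed $a \in \coalg(A) = A$, the element $a$ has nonzero homogeneous components only in degrees $\leq N$ for some $N$, and bigradedness of $\can$ forces $\can(b, a) = 0$ whenever a homogeneous $b \in \nsym$ has degree exceeding $N$. Hence $\can(b, a) = 0$ for every $b$ of sufficiently large degree. If $f$ factors as $\can \circ (\id \otimes \bar f)$ for some coalgebra map $\bar f$, local finiteness of $\can$ transfers immediately to $f$.

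For the ``if'' direction, assuming $f$ is locally finite, I would define $\bar f \colon C \to A$ by
$$\bar f(c) := \sum_{n \geq 0} f(H_n, c),$$
which is a finite sum by hypothesis. Three things then need to be checked. First, $\bar f$ is a coalgebra map: using $\Delta H_n = \sum_{i+j=n} H_i \otimes H_j$ together with $f$ being a coalgebra map gives $\Delta \bar f(c) = \sum_{(c)} \bar f(c_1) \otimes \bar f(c_2)$, and the counit condition follows from $f(1 \otimes c) = \ep(c)$ together with connectedness of $A$. Second, the factorization holds on the generators $H_n$: bigradedness forces $f(H_n, c) \in A_n$, so $\can(H_n, \bar f(c)) = \bar f(c)_n = f(H_n, c)$. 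Third, the factorization extends to all of $\nsym \otimes C$: since $\nsym$ is generated by the $H_n$, the measuring properties of both $f$ and $\can$, combined with $\bar f$ being a coalgebra map, propagate the equality on generators to arbitrary monomials.

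The main (though not deep) obstacle is the third step, where one must chase the coproduct of $c$ through a product of $H_{i_j}$'s under both $f$ and $\can$ and reconcile the two sides using the coalgebra-map property of $\bar f$. As a bonus, uniqueness of $\bar f$ falls out immediately from the second step: any coalgebra map $g \colon C \to \coalg(A)$ with $f = \can \circ (\id \otimes g)$ must satisfy $g(c)_n = f(H_n, c)$ for every $n$, so $g = \bar f$.
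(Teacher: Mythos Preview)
Your approach is correct and matches the paper's own argument: the paper's proof is the single line ``Put $\bar f(c) = \sum_{n\geq0} f(H_n,c)$,'' and everything you write is a fleshing-out of what that line leaves implicit. Your added verifications (coalgebra-map property of $\bar f$, the factorization on generators via homogeneity of $f(H_n,c)$, and the propagation to monomials via the measuring axioms) are exactly the checks the paper expects the reader to supply, and your treatment of the ``only if'' direction via local finiteness of $\can$ is the intended one.
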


\begin{proof}
Put $\bar f(c) = \sum_{n\geq0} f(H_n,c)$.
\end{proof}

\begin{exam}\label{ex: omp over nsym}
Let $\omp$ (for \demph{ordered multiset partitions}\footnote{\,The reader is cautioned to interpret \emph{ordered multiset partition} as an abbreviation for ``ordered partition of a multiset into sets.'' While the blocks of a multiset partition may have repeated entries, in general, recent combinatorics literature, {\it e.g.}, \cite{HRW:18}, excludes these cases from consideration.}) denote the free algebra on finite nonempty subsets $K\subseteq \N$. It inherits the structure of Hopf algebra if the coproduct on generators $K$ is defined by $\Delta(K) = \sum_{I\sqcup J = K} I \otimes J$. 
Identify $\emptyset$ with the unit in $\omp$. We show that $\omp$ covers $\nsym$, and hence is another weakly initial object in the subcategory of cocommutative objects in $\combinat$. 

We take $C$ to be the coalgebra $\ngens=\k\{H_0,H_1,H_2,\ldots\}$ from the proof of Proposition \ref{th: locally finite}. The  map $f \colon \omp\ot C \to \nsym$ given on generators by $f(K,H_k) = \delta_{|K|,k} H_k$ induces a measuring, where $|K|$ is the cardinality of the set $K$ and $H_0$ is identified with the unit of $\nsym$. 

We check that $f$ is a coalgebra map:
\begin{align*}
	\left(f^{\otimes2} \Delta\right)\,(K\otimes H_k) &= f^{\otimes2} \sum_{\substack{I+J=K\\ i+j=k}} \left(I\otimes H_i \right) \otimes \left( J \otimes H_j\right) \\
	&=  \sum_{\substack{I\sqcup J = K\\i+j=k}} \delta_{|I|,i} \, H_i \otimes \delta_{|J|,j} \, H_j \\
	&= \delta_{|K|,k} \sum_{i+j=k} H_i \otimes H_j \ 
	= \left(\Delta  f\right) (K \otimes H_k) .
\end{align*}
\end{exam}

We have now found two weakly initial objects for the cocommutative part of  $\combinat$. 

\begin{ques} Can we drop ``weakly'' in this bicategory: 
Is there a sensible equivalence relation on $\combinat$ making $\nsym$ and $\omp$ equivalent?
\end{ques}

\begin{ques} Are there non-cocommutative or non-graded analogs of $\nsym$ or $\omp$?
\end{ques}

\section{A Nichols Theorem for Hopf coverings}\label{sec: Hopf}

Let $B$ be a Hopf algebra. A result of Nichols \cite{Nic:78} gives some conditions under which a bialgebra quotient $B/I$ is in fact Hopf. For example, it is Hopf if $B/I$ is commutative or if $B$ is cocommutative or pointed. We now present two analogous results for coverings. (In light of Example \ref{ex: morphism family}, these are in fact generalizations of his result.)

\begin{theo}\label{th: fin-dim-cover}
If a Hopf algebra $B$ covers a finite dimensional bialgebra $A$, then $A$ is also a Hopf algebra.  
\end{theo}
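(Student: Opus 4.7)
The plan is to show that $\id_A$ is convolution-invertible in the finite-dimensional convolution algebra $\Hom(A,A)$; its inverse will serve as the antipode of $A$. The argument has three ingredients: convolution invertibility of the covering $f$ itself in $\Hom(B\otimes C, A)$, a lifting trick that propagates zero-divisor information from $\Hom(A,A)$ up to $\Hom(B\otimes C, A)$ by precomposition with $f$, and the elementary linear-algebra fact that in a finite-dimensional algebra any non-left-zero-divisor is invertible.

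First I would observe that the calculations in the proof of Theorem~\ref{th: transfer}(2) establishing $\bar f := f\circ(S_B\otimes \id)$ as a two-sided convolution inverse of $f$ never actually invoke an antipode on $A$: they use only that $f$ is a measuring, that $f$ is a coalgebra map, and that $S_B$ is an antipode for $B$. Thus, even in the present situation where $A$ is not yet known to be Hopf, $f$ is automatically a unit in the convolution algebra $\Hom(B\otimes C, A)$, with explicit inverse $\bar f$.

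Next, suppose $g\in \Hom(A,A)$ satisfies $g * \id_A = 0$. Because $f$ is a coalgebra map, for every $x\in B\otimes C$ one has
\[
\bigl((g*\id_A)\circ f\bigr)(x) \;=\; \sum_{(x)} g(f(x_1))\,f(x_2) \;=\; \bigl((g\circ f)*f\bigr)(x),
\]
so $(g\circ f)*f = 0$ in $\Hom(B\otimes C, A)$. Right-convolving with $\bar f$ yields $g\circ f = 0$, and surjectivity of $f$ then forces $g = 0$. A symmetric argument (left-convolving with $\bar f$) handles the case $\id_A * g = 0$. Therefore $\id_A$ is neither a left nor a right zero-divisor in $\Hom(A,A)$. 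Since $A$ is finite-dimensional, $\Hom(A,A)$ is a finite-dimensional $\k$-algebra under convolution, so left convolution by $\id_A$ is a linear bijection; its preimage of the convolution unit is the sought antipode $S_A$, making $A$ a Hopf algebra.

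I do not expect a deep obstacle here. The one point requiring care is the identity $(g*\id_A)\circ f = (g\circ f) * f$, which is just the observation that postcomposition with the coalgebra map $f$ intertwines the two convolution structures. The other tempting worry --- circularity in citing Theorem~\ref{th: transfer}(2) --- dissolves on inspection of that theorem's proof: the computation showing $\bar f$ convolves with $f$ to the counit uses only $S_B$ together with the measuring and coalgebra-map axioms on $f$, and so it is available before $A$ is known to be Hopf.
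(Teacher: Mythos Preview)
Your argument is correct and complete. The verification that $\bar f = f\circ(S_B\otimes\id)$ is a two-sided convolution inverse of $f$ indeed uses only the measuring axiom and $S_B$, not any antipode on $A$; the intertwining identity $(g*\id_A)\circ f = (g\circ f)*f$ holds because $f$ is a coalgebra map; and the passage from ``not a zero-divisor'' to ``unit'' in a finite-dimensional algebra is standard.

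The paper takes a different route. Rather than working in the convolution algebra $\Hom(A,A)$, it studies the Galois map $\beta\colon A\otimes A\to A\otimes A$, $\beta(a,a')=a_1\otimes a_2a'$, and invokes Schauenburg's criterion that bijectivity of $\beta$ forces $A$ to be Hopf. It lifts $\beta$ to a map $\gamma$ on $B\otimes C\otimes A$ which has an explicit right inverse built from $S_B$, checks the intertwining $\beta\circ(f\otimes\id)=(f\otimes\id)\circ\gamma$, and concludes $\beta$ is surjective (hence bijective by finite-dimensionality). The two proofs share the same skeleton---use $S_B$ to build an inverse upstairs, push it down via the surjective $f$, and invoke finite-dimensionality to turn one-sided information into two-sided---but yours is more self-contained, avoiding the external citation to Schauenburg. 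The paper's version, on the other hand, frames things in Hopf--Galois language that may generalise more readily (e.g., replacing ``finite-dimensional'' by a faithful-flatness hypothesis).
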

\begin{proof}
We consider $A$ as a (faithfully flat, left) $A$-comodule algebra, and study the Galois map $\beta\colon A\otimes A\to A\otimes A$ given by 
\[
\beta(a,a')=a_1\otimes a_2a'.
\]
A result of Schauenburg \cite{Sch:97} provides that if $\beta$ is bijective, then $A$ is Hopf.
We prove that $\beta$ is surjective and therefore bijective. 

Let ${f}\colon B\ot C\to A$ be the covering
and let $\gamma\colon B\ot C\ot A\to B\ot C\ot A$ be the linear map given by $$\gamma(b,c,a)=b_1\ot c_1\ot {f}(b_2,c_2)a.$$ Note that $\gamma'\colon B\ot C\ot A\to B\ot C\ot A$ given by $$\gamma'(b,c,a)=b_1\ot c_1\ot {f}(S(b_2),c_2)a$$  is a right composition inverse of $\gamma$, so $\gamma$ is surjective. Indeed 
\begin{eqnarray*}(\gamma\circ \gamma')(b\ot c\ot a)&=& \gamma(b_1\ot c_1\ot {f}(S(b_2),c_2)a) \\
&=& b_1\ot c_1\ot {f}(b_2,c_2){f}(S(b_3),c_3)a =
b_1\ot c_1\ot {f}(b_2S(b_3), c_2)a \\
&=& b_1\ot c_1 \ot {f}(\ep(b_2),c_2)a = b_1\ot c_1\ot \ep(b_2)\ep(c_2)a = b\ot c\ot a.
\end{eqnarray*}

Now note that $\beta({f}\otimes\id_A) = ({f}\otimes\id_A)\gamma$:
\begin{eqnarray*}
\beta({f}\ot\id_A)(b,c,a) &=& \beta({f}(b,c)\ot a) \\
&=&{f}(b,c)_1\ot {f}(b,c)_2 a= {f}(b_1,c_1)\ot {f}(b_2,c_2) a\\
&=&  ({f}\ot\id_A)(b_1\ot c_1\ot{f}(b_2,c_2)a) = ({f}\ot\id_A)\gamma(b,c,a).
\end{eqnarray*}
So since $({f}\otimes\id_A)\gamma$ is surjective, so must be $\beta$. 

\end{proof}

We now consider the case when $A$ is pointed.

\begin{lemm}\label{lem: p}
If $f\colon B\pcovers A$ is a partial cover over an algebraically closed field, and $B$ is a pointed Hopf algebra, then its image is pointed in $A$.
\end{lemm}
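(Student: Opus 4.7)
The plan is to show that the image $A' := f(B\otimes C)$ has its coradical trapped inside a cocommutative subcoalgebra of $A$, at which point the classical fact that cocommutative coalgebras over an algebraically closed field are pointed will finish the argument.

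First I would exploit the coradical filtration $B_0\subseteq B_1\subseteq \cdots$ of the pointed Hopf algebra $B$. Since $B$ is pointed, $B_0=\k G(B)$; and since $\Delta B_n\subseteq \sum_{i+j\le n}B_i\otimes B_j$ and $f$ is a coalgebra map, the subspaces $A'_n:=f(B_n\otimes C)$ constitute a coalgebra filtration of $A' = \bigcup_n A'_n$. By minimality of the coradical filtration,
\[
(A')_0 \;\subseteq\; A'_0 \;=\; f(B_0\otimes C) \;=\; \sum_{g\in G(B)} f_g(C),
\]
where $f_g := f(g\otimes{-})\colon C\to A$.

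Next I would apply Lemma \ref{lem: Ccoc} with $b=b'=g$ to deduce, for each $g\in G(B)$ and each $c\in C$, the identity $f_g(c_1)\otimes f_g(c_2) = f_g(c_2)\otimes f_g(c_1)$. Since $g$ is grouplike, $f_g$ is itself a coalgebra map (from the coalgebra-map property of $f$), so this identity is precisely the statement that $f_g(C)$ is a cocommutative subcoalgebra of $A$. A sum of cocommutative subcoalgebras remains cocommutative, so $f(B_0\otimes C)$ is cocommutative.

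Finally I would invoke the standard fact that, over an algebraically closed field, every cocommutative coalgebra is pointed: the dual of a finite dimensional simple cocommutative $\k$-algebra must be $\k$, so every simple subcoalgebra of a cocommutative coalgebra is one-dimensional. The subcoalgebra $(A')_0$ of the pointed coalgebra $f(B_0\otimes C)$ is therefore also pointed, and being cosemisimple it is spanned by grouplikes. Hence $A'$ is pointed. The only step where something could plausibly fail is the cocommutativity of each $f_g(C)$, but Lemma \ref{lem: Ccoc} specializes exactly to this claim when $b=b'=g$, so no real obstacle arises.
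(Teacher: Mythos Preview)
Your argument is correct and follows essentially the same route as the paper's proof: both reduce to showing that $f(\k g\otimes C)$ is cocommutative for each grouplike $g\in B$, and then appeal to the fact that cocommutative coalgebras over an algebraically closed field are pointed. The paper phrases the reduction via lifting simple subcoalgebras of the image to $B'\otimes C'$ with $B'=\k\{z\}$ simple, and cites Proposition~\ref{th: cocomm} rather than Lemma~\ref{lem: Ccoc}, but these are cosmetic differences.
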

\begin{proof}
Any simple subcoalgebra of $A$ is the image of $B'\ot C'$ for some simple subcoalgebras $B',C'$. Hence $B' = \k\{z\}$ for some {grouplike} $z\in B$. Using Proposition \ref{th: cocomm}, we see that $f(B'\ot C')$ is cocommutative. Hence $f(B\ot C)$ is pointed.
\end{proof}

\begin{lemm}\label{lem: phi-exists}
Suppose $f\colon B\ot C \to A$ is a partial covering and $z$ is an invertible point in $B$. Then the map $\p=f(z,\arg)\colon C\to A$ is convolution coalgebra map whose convolution inverse $\pinv = f(z^{-1},\arg)$ is also a coalgebra map.
\end{lemm}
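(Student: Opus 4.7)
The plan is to verify three claims in turn, each of which follows directly from the two defining properties of a partial covering (measuring plus coalgebra map) combined with the grouplike nature of $z$ and $z^{-1}$.

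First, I would note that $z^{-1}$ is also a point (grouplike) in $B$. Since $\Delta$ is an algebra map and $z$ is grouplike,
\[
1\ot 1 = \De(zz^{-1}) = (z\ot z)\De(z^{-1}),
\]
from which $\De(z^{-1}) = z^{-1}\ot z^{-1}$ follows upon multiplying both sides by $z^{-1}\ot z^{-1}$. Similarly $\ep(z^{-1})\ep(z)=\ep(1)=1$ forces $\ep(z^{-1})=1$. This step is essentially bookkeeping and should not present any obstacle.

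Next, I would show that $\p$ is a coalgebra map using only that $f$ is a coalgebra map and $z$ is grouplike. Since $\De(z\ot c)=(z\ot c_1)\ot(z\ot c_2)$, the coalgebra-map property of $f$ gives
\[
\De\p(c) = \De f(z,c) = f(z,c_1)\ot f(z,c_2) = \p(c_1)\ot \p(c_2),
\]
while $\ep\p(c)=\ep f(z,c)=\ep(z)\ep(c)=\ep(c)$. Replacing $z$ by $z^{-1}$ in the same argument (and using the previous paragraph) shows that $\pinv$ is also a coalgebra map.

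Finally, I would verify that $\p$ and $\pinv$ are mutual convolution inverses in $\Hom_\k(C,A)$, this time using the measuring property of $f$. Explicitly,
\[
(\p * \pinv)(c) = \sum_{(c)} f(z,c_1)\, f(z^{-1},c_2) = f(zz^{-1},c) = f(1,c) = \ep(c),
\]
and the opposite composition is identical after swapping $z\leftrightarrow z^{-1}$. Since the convolution unit in $\Hom_\k(C,A)$ is $u_A\ep$, this shows $\pinv = \p^{-1}$. There is no real obstacle here; the only subtlety worth flagging is that throughout, $z^{-1}$ denotes the multiplicative inverse of $z$ in $B$ (which exists by hypothesis), and once one observes it is again grouplike the two halves of the lemma become symmetric.
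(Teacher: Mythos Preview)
Your proof is correct and follows the same route as the paper's. The paper's own argument is terser: it only writes out the convolution-inverse check $(\pinv\ast\p)(c)=f(z^{-1},c_1)f(z,c_2)=f(z^{-1}z,c)=\ep(c)$, leaving the coalgebra-map property of $\p$ and $\pinv$ (and the grouplikeness of $z^{-1}$) as understood, whereas you spell these out explicitly.
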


\begin{proof}
A simple check 
\begin{align*}
(\pinv\ast\varphi)(c) &= \pinv(c_1)\p(c_2) \\
	&=f(z^{-1},c_1)f(z,c_2) = f(z^{-1}z,c) = f(1,c) = \ep(c).
\end{align*}
Similarly for $\p\ast\pinv=\ep$.
\end{proof}

\begin{lemm}\label{lem: phi-is-nice}
Assume that $\k$ is algebraically closed, $C$ a simple coalgebra and $A$ a pointed bialgebra.  Assume that $\p\colon C \to A$ is a convolution invertible coalgebra map (in $\Hom(C,A)$) such that its convolution inverse $\pinv$ is a coalgebra map. Then every point in the range of $\phi$ is invertible.
\end{lemm}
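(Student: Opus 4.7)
The plan is to use the simplicity of $C$ together with pointedness of $A$ to force the image of $\p$ to be a one-dimensional subcoalgebra of $A$, after which invertibility of its unique grouplike will fall out of the convolution-inverse relation. The case $\p = 0$ is vacuous, since any grouplike satisfies $\ep(g) = 1 \neq 0$, so assume henceforth that $\p \neq 0$.

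The first step is to show $\p(C) = \k g$ for some grouplike $g \in A$. Since $\k$ is algebraically closed and $C$ is simple, $C \cong M_n(\k)^*$ for some $n$; by duality the coideals of $C$ correspond to two-sided ideals in $M_n(\k)$ and are therefore only $\{0\}$ or $C$. Hence $\p$ is injective, so $\p(C) \cong C$ is a simple subcoalgebra of $A$, and pointedness forces every simple subcoalgebra of $A$ to be one-dimensional and spanned by a grouplike. Applying the same reasoning to $\pinv$—which is nonzero because $\pinv * \p = u \circ \ep_C$ is not the zero map—yields $\pinv(C) = \k h$ for some grouplike $h \in A$.

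Next, any coalgebra map from $C$ into a one-dimensional coalgebra $\k g$ must take the form $c \mapsto \ep(c) g$, so $\p(c) = \ep(c) g$ and $\pinv(c) = \ep(c) h$ for every $c \in C$. Picking any $c \in C$ with $\p(c) = g$ gives $\ep(c) = \ep(g) = 1$, and evaluating the convolution identity $\pinv * \p = u \circ \ep_C$ at such a $c$ reduces (using $\sum \ep(c_1)\ep(c_2) = \ep(c)$) to $hg = 1_A$. The mirror identity $\p * \pinv = u \circ \ep_C$ yields $gh = 1_A$, so $g$ is invertible.

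The main obstacle is the first structural step—simultaneously leveraging simplicity of $C$ to pin down $\ker \p$ and pointedness of $A$ to collapse $\p(C)$ to one dimension, and doing the same for $\pinv$. Once that structural picture is in place, the convolution computation is a one-liner.
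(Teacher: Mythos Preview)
Your argument breaks at the claim that a nonzero coalgebra map out of a simple coalgebra must be injective. You justify this by asserting that ``coideals of $C$ correspond to two-sided ideals in $M_n(\k)$,'' but that is the wrong duality: for finite-dimensional $C$, \emph{subcoalgebras} of $C$ correspond to two-sided ideals of $C^*$, while \emph{coideals} of $C$ correspond to unital subalgebras of $C^*$. The algebra $M_n(\k)$ has plenty of proper unital subalgebras (diagonal matrices, upper-triangular matrices, \dots), so $M_n(\k)^*$ has plenty of nontrivial coideals, and $\ker\p$ can perfectly well be one of them. Concretely, for $n=2$ the map $\p(e^{11})=\p(e^{22})=g$, $\p(e^{12})=\p(e^{21})=0$ into any pointed bialgebra containing a grouplike $g$ is a nonzero, non-injective coalgebra map. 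Once injectivity fails, the rest of your argument --- that $\p(C)\cong C$ is simple, hence one-dimensional --- collapses, and you cannot conclude that $\p$ has the form $c\mapsto \ep(c)g$.

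The paper's proof has to work around exactly this phenomenon. It dualizes to embed $\p(C)^*$ as a subalgebra $A'\subseteq M_n(\k)$; pointedness of $A$ makes $A'$ basic, and a Wedderburn--Malcev decomposition lets one conjugate $A'$ into upper-triangular form. This forces $\p(e^{ij})=0$ for $i<j$, and an induction on $i$ then shows $\p(e^{ii})\pinv(e^{ii})=1=\pinv(e^{ii})\p(e^{ii})$ and $\pinv(e^{ji})=0$ for $j>i$, using both convolution identities $\pinv\ast\p=\ep$ and $\p\ast\pinv=\ep$ (the latter applied via $\Delta^{cop}$). Since every grouplike in $\p(C)$ lies in the span of the $\p(e^{ii})$ (indeed equals one of them, by linear independence of grouplikes), this yields the result. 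Your shortcut would only be valid in the $n=1$ case.
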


\begin{proof}
First note that for all $c\in C$ we have $\p(c_2)\pinv(c_1)=\ep(c)=\pinv(c_2)\p(c_1)$. The later equality is proven as follows:
\begin{eqnarray*}
\ep(c)&=& \pinv(c_1)\ep(c_2)\p(c_3) = \pinv(c_1)(\m\Delta(\ep(c_2)))\p(c_3)\\
&=&\pinv(c_1)(\m\Delta \p(c_2)\pinv(c_3))\p(c_4)\\
&=&\pinv(c_1)(\p(c_2)\pinv(c_4)\p(c_3)\pinv(c_5))\p(c_6)\\
&=&(\pinv(c_1)\p(c_2))\pinv(c_4)\p(c_3)(\pinv(c_5))\p(c_6))\\
&=&\ep(c_1)\pinv(c_3)\p(c_2)\ep(c_4) = \pinv(c_2)\p(c_1);
\end{eqnarray*}
the proof of the former works similarly.

Since $C$ is simple and $\k$ is algebraically closed, we have that $C$ is isomorphic to a matrix coalgebra $\k\{e^{ij}\}_{1\leq i,j\leq n} = \mathcal{M}_n(\k)^*$.  Let $A'=\p(C)^*$.  We identify $A'$ with a subalgebra of $C^*=\mathcal{M}_n(\k)$ (via $\p^*\colon A'\to C^*=\mathcal{M}_n(\k)$).  Let $A'=S\oplus N$ be the Wederburn-Malcev decomposition of $A'$ into its semisimple part $S$ and the radical $N$.  Since $A$ and therefore $\p(C)$ is pointed (and hence $A'$ is a basic algebra) we have that $S$ is isomorphic to a direct product of copies of the ground field.  We additionally assume (using a simultaneous similarity if necessary), that $S$ is a subset of the set of diagonal matrices in $\mathcal{M}_n(\k)$, and $N$ is a subset of the set of all strictly-upper triangular matrices in $\mathcal{M}_n(\k)$ 

We will now show that every $\p(e^{ii})$ is invertible, with inverse $\pinv(e^{ii})$, from whence the result follows (the above discussion shows that every point in $\p(C)$ is in the span of $\{\p (e^{ii}): i=1\,\ldots, n\}$.  Since the set of all distinct points is linearly independent, we have that every point if $\p(C)$ is equal to some $\p(e^{ii})$).  Since $A'$ is upper triangular we also note that for all $1\le i<j\le n$ we have that $\p(e^{ij})=0$.  We will prove by induction on $i$ that for every $i=1,\ldots, n$, we have $\p(e^{ii})\pinv(e^{ii})=1=\pinv(e^{ii})\p(e^{ii})$ and for all $j>i$, $\pinv(e^{ji})=0$.

\smallskip\noindent
\textbf{Base case.} Writing $\m$ for multiplication in $A$, we first compute
\begin{align*} 1 & = \ep(e^{11}) = \m(\pinv\ot\p)\Delta(e^{11}) 
 = \sum_{k=1}^n\pinv(e^{1k})\p(e^{k1})  =  \pinv(e^{11})\p(e^{11}),
\intertext{and}
1& = \ep(e^{11}) = \m(\p\ot\pinv)\Delta^{cop}(e^{11}) 
 = \sum_{k=1}^n\p(e^{k1})\pinv(e^{1k})  =  \p(e^{11})\pinv(e^{11}).
\intertext{For $j>1$, we further have}
0 & =  \ep(e^{j1})  =  \m(\pinv\ot\p)\Delta(e^{j1}) 
 = \sum_{i=1}^n \pinv(e^{ji})\p(e^{i1})  =  \pinv(e^{j1})\p(e^{11}).
\end{align*}
Since $\p(e^{11})$ is invertible we conclude that $\pinv(e^{j1})=0$.

\smallskip\noindent
\textbf{Induction step.} Assume that for some $m$, $2\le m\le n$ we have that every $i<m$
satisfies $\p(e^{ii})\pinv(e^{ii})=1=\pinv(e^{ii})\p(e^{ii})$ and for all $j>i$, $\pinv(e^{ji})=0$.  Then
\begin{align*} 1& = \ep(e^{mm})=\m(\pinv\ot\p)\Delta(e^{mm})
=\sum_{k=1}^n\pinv(e^{mk})\p(e^{km})\\ 
& =  \sum_{k<m} \pinv(e^{mk})\p(e^{km})+\pinv(e^{mm})\p(e^{mm})+\sum_{k>m} \pinv(e^{mk})\p(e^{km})\\
& =  0 + \pinv(e^{mm})\p(e^{mm}) +0
\intertext{and}
1 & = \ep(e^{mm})=\m(\p\ot\pinv)\Delta^{cop}(e^{mm})
=\sum_{k=1}^n\p(e^{km})\pinv(e^{mk})\\ 
& = \sum_{k<m} \p(e^{km})\pinv(e^{mk})+\pinv(e^{mm})\p(e^{mm})+\sum_{k>m} \p(e^{km})\pinv(e^{mk})\\
& =  0 + \p(e^{mm})\pinv(e^{mm}) +0;
\intertext{and for $j>m$, we have}
0 & =  \ep(e^{jm}) = \m(\pinv\ot\p)\Delta(e^{jm}) \\
& = \sum_{i=1}^n \pinv(e^{ji})\p(e^{im}) = \pinv(e^{j1})\p(e^{11})\\
& =  \sum_{i<m} \pinv(e^{ji})\p(e^{im}) + \pinv(e^{jm})\p(e^{mm}) +
\sum_{i>m} \pinv(e^{ji})\p(e^{im}) \\
& =  0+\pinv(e^{jm})\p(e^{mm})+0.
\end{align*}
Therefore, since $\p(e^{mm})$ is invertible, we get $\pinv(e^{jm})=0$.
\end{proof}
\begin{theo}\label{th: pointed-cover}
Let $f\colon B\covers A$ be a bialgebra covering. If $B$ is a Hopf algebra with cocommutative coradical, then $A$ is a Hopf algebra as well.
\end{theo}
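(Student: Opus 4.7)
My plan is to show that $A$ is pointed and that its grouplikes form a group, whence the classical criterion (a pointed bialgebra whose set of grouplikes is a group is a Hopf algebra, via the coradical filtration argument of Takeuchi/Sweedler) supplies the antipode of $A$. The first step is to reduce to the case that $\k$ is algebraically closed. The hypotheses transfer cleanly under base change to $\bar\k$: the coradical of $B\otimes_\k \bar\k$ is $B_0\otimes_\k \bar\k$, still cocommutative, and $f$ base-changes to a covering of $A\otimes_\k\bar\k$. Since antipodes, when they exist, are unique, an antipode of $A\otimes_\k\bar\k$ is $\mathrm{Gal}(\bar\k/\k)$-equivariant and descends to one of $A$. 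With $\k=\bar\k$ in force, a cocommutative coalgebra over $\k$ is pointed, so $B$ itself is a pointed Hopf algebra, and Lemma \ref{lem: p} gives that $A$ is pointed.

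Next I would show every $g\in G(A)$ is invertible in $A$. Following the proof of Lemma \ref{lem: p}, the simple subcoalgebra $\k g\subseteq A$ is contained in an image $f(B'\otimes C')=\phi(C')$, where $B'=\k z$ for some grouplike $z\in B$, $C'\subseteq C$ is a simple subcoalgebra, and $\phi=f(z,\arg)$. Because $B$ is a Hopf algebra, $z$ is invertible with $z^{-1}=S_B(z)$, and Lemma \ref{lem: phi-exists} applies: $\phi$ is a convolution invertible coalgebra map whose convolution inverse $\pinv=f(z^{-1},\arg)$ is itself a coalgebra map. Restricting along the inclusion $C'\hookrightarrow C$ preserves these properties (since $C'$ is a subcoalgebra), so the hypotheses of Lemma \ref{lem: phi-is-nice} are now in force for $\phi|_{C'}\colon C'\to A$: every point in $\phi(C')$ is invertible in $A$. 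Since $g\in\phi(C')$, this gives $g$ invertible, and hence $G(A)$ is a group.

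Finally, with $A$ pointed and $G(A)$ a group, the coradical $A_0=\k G(A)$ is a Hopf subalgebra of $A$, and the inclusion $A_0\hookrightarrow A$ admits $g\mapsto g^{-1}$ as a convolution inverse. By the standard lifting result (a bialgebra is Hopf as soon as its coradical inclusion has a convolution inverse; equivalently, Takeuchi's criterion together with the coradical filtration) this extends to a convolution inverse of $\id_A$, which is the antipode of $A$.

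I expect the main obstacle to be the reduction to an algebraically closed field and the clean use of the proof of Lemma \ref{lem: p}: the lemma's statement merely says $A$ is pointed, while I need the slightly sharper output --- that each grouplike of $A$ lies in some image $\phi(C')$ with $\phi=f(z,\arg)$, $z\in G(B)$, and $C'\subseteq C$ simple --- in order to feed Lemmas \ref{lem: phi-exists} and \ref{lem: phi-is-nice}. Extracting this refinement from the Lemma \ref{lem: p} argument is the only non-mechanical step; after that, the result is a bookkeeping assembly of lemmas already proved in this section, together with the classical pointed-bialgebra-to-Hopf-algebra extension theorem.
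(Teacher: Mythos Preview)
Your proposal is correct and follows essentially the same route as the paper's proof: reduce to $\k$ algebraically closed, observe that $B$ is then pointed, deduce $A$ is pointed, locate each grouplike of $A$ inside some $f(\k z\otimes C')$ with $z\in G(B)$ and $C'$ simple, and then invoke Lemmas~\ref{lem: phi-exists} and~\ref{lem: phi-is-nice} together with Takeuchi's criterion. The only cosmetic difference is that the paper cites Proposition~\ref{th: cocomm} directly for pointedness of $A$, whereas you go through Lemma~\ref{lem: p} (whose proof rests on that proposition); your explicit acknowledgment that the ``sharper'' output of the Lemma~\ref{lem: p} argument is needed is exactly what the paper uses as well.
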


\begin{proof}
With no loss of generality we can and do assume that $\k$ is algebraically closed.  We use Takeuchi's condition: an antipode exists on $A$ if it may be defined on the coradical of $A$.
Looking at that coradical, we use Proposition \ref{th: cocomm} to conclude that $A$ is pointed.  Note that every point in $A$ is in the image, under the covering, of some
$\k\{z\}\ot C\cong C$, where $z$ is a point in $B$ and $C$ is a simple sub-coalgebra of $C_f$.  Hence by Lemmas \ref{lem: phi-exists} and \ref{lem: phi-is-nice} we have that every point in $A$ is invertible. 
\end{proof}



\end{document}